\documentclass[11pt,reqno]{amsart}
\usepackage[T1]{fontenc}
\usepackage[utf8]{inputenc}
\usepackage{lmodern}
\usepackage[margin=1.05in]{geometry}
\usepackage{amsmath,amssymb,amsthm,mathtools}
\usepackage{booktabs,array}
\usepackage{microtype}
\usepackage[hidelinks]{hyperref}
\hypersetup{pdftitle={Ehrhart reciprocity and forced factors in three plane-partition enumerators},
pdfauthor={Yinuo Cheng},
pdfkeywords={plane partitions, Ehrhart reciprocity, irreducibility}}
\numberwithin{equation}{section}
\newtheorem{theorem}{Theorem}[section]
\newtheorem{proposition}[theorem]{Proposition}
\newtheorem{lemma}[theorem]{Lemma}

\theoremstyle{definition}
\newtheorem{definition}[theorem]{Definition}

\theoremstyle{remark}
\newtheorem{remark}[theorem]{Remark}
\newcommand{\Z}{\mathbb Z}
\newcommand{\Q}{\mathbb Q}
\newcommand{\R}{\mathbb R}
\newcommand{\NN}{\mathbb Z_{\geq0}}
\newcommand{\one}{\mathbf 1}
\newcommand{\relint}{\operatorname{relint}}

\newcommand{\aff}{\operatorname{aff}}
\newcommand{\den}{\operatorname{den}}

\newcommand{\qsp}{p_a}
\newcommand{\qtp}{q_a}
\newcommand{\stp}{s_a}
\newcommand{\QS}{\mathcal A}
\newcommand{\QT}{\mathcal Q}
\newcommand{\ST}{\mathcal S}
\newcommand{\eps}{\varepsilon}

\title[Ehrhart reciprocity for three plane-partition enumerators]
{Ehrhart reciprocity and forced factors in three plane-partition enumerators}
\author{Yinuo Cheng}
\address{School of Mathematical Sciences,
Capital Normal University, Beijing 100048, P.R. China}
\email{2250502145@cnu.edu.cn}

\date{}
\subjclass[2020]{Primary 05A15; Secondary 05A17, 52B20, 06A07}
\keywords{Plane partition, quasi-symmetry, Ehrhart polynomial,
Ehrhart--Macdonald reciprocity, lattice polytope, coefficient denominator}

\begin{document}
\begin{abstract}
We study three plane-partition enumerators arising from Schreier-Aigner's
quasi-symmetry classes. Their realizations as lattice-point enumerators,
together with staircase translations of interior lattice points, yield
factorizations by Ehrhart--Macdonald reciprocity. We determine the
consecutive linear factors, the parity and degree of the residual
polynomials, and explicit divisibility bounds for their coefficient
denominators. The denominator argument includes the half-integral
translation required by the second-kind classes. We also derive a corrected
size-five formula for the symmetric second-kind class. Exact computations
verify irreducibility of the quasi-symmetric residual
polynomials for every size from $3$ to $24$.
\end{abstract}
\maketitle

\section{Introduction}\label{sec:intro}

Schreier-Aigner introduced several quasi-symmetry classes in the study of
fully complementary higher dimensional partitions
\cite[Section~4]{SchreierAigner}. The enumerations in that paper suggest
factorizations for three ordinary plane-partition enumerators. These expressions
contain a string of consecutive linear factors, a possible additional factor
at the center of that string, and an even residual polynomial. They also
contain irreducibility assertions and a qualitative assertion about small
prime factors in coefficient denominators.

Theorems~\ref{thm:qs}--\ref{thm:st} establish the factorizations and parity
statements, determine the degrees, and give explicit denominator bounds.
Each enumerator is realized as the Ehrhart polynomial of a lattice
polytope whose interior lattice points are obtained by staircase
translation. The quasi-symmetric polytope is an order polytope;
the second-kind models have signed coordinate identifications.
Section~\ref{sec:limits} reports exact finite-range irreducibility
computations. Irreducibility for arbitrary admissible sizes remains open.

We use $h$ for the \emph{half-height} of a second-kind plane partition:
both the box height and the complementary sum are $2h$. The definition
in \cite[Section~4.1]{SchreierAigner} specifies a box height $2c$
but a complementary sum $c$. Our convention agrees with the enumerations
in its Appendices A.2 and A.3; see Remark~\ref{rem:normalization}.

\subsection{The three families}
Write $[a]=\{1,\ldots,a\}$, with $a\geq1$. An array in an $(a,a,H)$-box is a
matrix $\pi=(\pi_{ij})_{(i,j)\in[a]^2}$ of integers. It is a plane partition
if
\begin{equation}\label{eq:pp}
\begin{aligned}
&0\leq\pi_{ij}\leq H &&(1\leq i,j\leq a),\\
&\pi_{ij}\geq\pi_{i+1,j} &&(1\leq i<a,\ 1\leq j\leq a),\\
&\pi_{ij}\geq\pi_{i,j+1} &&(1\leq i\leq a,\ 1\leq j<a).
\end{aligned}
\end{equation}
The height is an upper bound; it need not be attained. Define the maps
\begin{equation}\label{eq:involutions}
\begin{split}
s(i,j)&=(j,i),\\
\rho(i,j)&=(a+1-i,a+1-j),\\
\tau(i,j)&=(a+1-j,a+1-i)=\rho s(i,j).
\end{split}
\end{equation}
Each is an involution. The maps $s$ and $\rho$ commute. The fixed cells of
$\tau$ lie on the anti-diagonal $i+j=a+1$.

\begin{definition}\label{def:families}
For $H,h\in\NN$, define the following sets and their cardinalities.
\begin{enumerate}
\item $\QS_a(H)$ consists of the plane partitions in an $(a,a,H)$-box
such that
\begin{equation}\label{eq:qs}
\pi_{ij}=\pi_{ji}\qquad\text{if }i+j\ne a+1.
\end{equation}
Set $A_a(H)=|\QS_a(H)|$.
\item $\QT_a(h)$ consists of the plane partitions in an $(a,a,2h)$-box
such that
\begin{equation}\label{eq:qt}
\pi_{ij}+\pi_{\tau(i,j)}=2h\qquad\text{if }i\ne j.
\end{equation}
Set $Q_a(h)=|\QT_a(h)|$.
\item $\ST_a(h)$ consists of the elements of $\QT_a(h)$ satisfying
\begin{equation}\label{eq:sym}
\pi_{ij}=\pi_{ji}\qquad\text{for all }i,j.
\end{equation}
Set $S_a(h)=|\ST_a(h)|$.
\end{enumerate}
\end{definition}

The exception in \eqref{eq:qs} is the anti-diagonal, whereas the exception
in \eqref{eq:qt} is the main diagonal. In particular,
\eqref{eq:qt} imposes no relation between two main-diagonal entries, even
when their cells are exchanged by $\tau$. All three sets contain exactly
the zero matrix at height zero. No quarter-complementarity or prescribed
volume condition is imposed.

\begin{remark}\label{rem:normalization}
Take $a=2$ and $h=1$. The two off-diagonal cells are fixed by $\tau$.
Equation~\eqref{eq:qt} therefore forces both entries to be $1$. The arrays
are precisely
\[
\begin{pmatrix}u&1\\1&t\end{pmatrix},
\qquad u\in\{1,2\},\quad t\in\{0,1\}.
\]
Consequently $Q_2(1)=S_2(1)=4$, agreeing with the corresponding entries in
Appendices A.2 and A.3 of \cite{SchreierAigner}. If the complementary sum
were $h=1$ instead, either fixed off-diagonal cell would have to satisfy
$2\pi_{ij}=1$, so the count would be zero.
\end{remark}

\subsection{Main results}
For a rational polynomial, $\den(f)$ denotes the least positive integer
$D$ such that $Df$ has integer coefficients. Equivalently, it is the least
common multiple of the reduced denominators of its coefficients. Put
\begin{equation}\label{eq:dimensions}
N_a=\frac{a(a+1)}2+\left\lfloor\frac a2\right\rfloor,
\quad d_a=\left\lceil\frac{a^2}{2}\right\rceil,
\quad e_a=\left\lfloor\frac{(a+1)^2}{4}\right\rfloor.
\end{equation}
We use the polynomial convention
\[
\binom{x}{k}=\frac{x(x-1)\cdots(x-k+1)}{k!},\qquad
\binom{x}{0}=1.
\]
Thus the variable $c$ in the next theorems is a polynomial variable. A
counting interpretation is asserted only when the corresponding height is
a nonnegative integer.

\begin{theorem}[Quasi-symmetric class]\label{thm:qs}
For each $a\geq1$, $A_a(H)$ extends uniquely to a rational polynomial of
degree $N_a$. Let $\alpha_a=1$ for even $a$ and $\alpha_a=0$ for odd $a$.
There is a unique nonzero even polynomial $\qsp(c)\in\Q[c]$ such that
\begin{equation}\label{eq:main-qs}
A_a(c-a)=c^{\alpha_a}\binom{c+a-1}{2a-1}\qsp(c).
\end{equation}
Its degree and denominator satisfy
\begin{equation}\label{eq:main-qs-degree-den}
\deg\qsp=\left\lfloor\frac{(a-1)^2}{2}\right\rfloor,
\qquad
\den(\qsp)\ \bigm|\ \frac{N_a!}{(2a-1)!}.
\end{equation}
\end{theorem}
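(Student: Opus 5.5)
The plan is to realize $A_a(H)$ as the order polynomial of a poset, apply Ehrhart--Macdonald (Stanley) reciprocity with a staircase shift, and read off the linear factors, the parity and the denominator from that functional equation.

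\emph{Order-polynomial model.} By \eqref{eq:qs}, an element of $\QS_a(H)$ is determined by its entries on the cells $(i,j)$ with $i\le j$, together with the anti-diagonal cells $(i,a+1-i)$ with $i>a+1-i$. These are $a(a+1)/2+\lfloor a/2\rfloor=N_a$ free cells. Let $P$ be the set of these cells with the partial order generated by the grid inequalities in \eqref{eq:pp}, transported through the identification $(i,j)\sim(j,i)$ off the anti-diagonal. Then $\QS_a(H)$ is in bijection with the order-reversing maps $P\to\{0,\dots,H\}$. Hence $A_a(H)$ is the Ehrhart polynomial of the order polytope of $P$ evaluated at $H$, of degree $|P|=N_a$. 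Moreover, Stanley's $h^*$-expansion gives $A_a(H)=\sum_k w_k\binom{H+N_a-k}{N_a}$ with $w_k\in\Z$, so $A_a$ is integer-valued on $\Z$. Uniqueness of the polynomial extension is automatic.

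\emph{Reciprocity with staircase translation.} By Ehrhart--Macdonald reciprocity, $(-1)^{N_a}A_a(-H)$ counts strictly order-reversing maps $P\to\{1,\dots,H-1\}$. Use the rank $r(i,j)=2a-i-j$, which is invariant under $s$ and hence well defined on $P$. Every covering relation of $P$ joins grid-adjacent cells, whose ranks differ by exactly $1$; this includes the relations around the unidentified anti-diagonal cells. So $\pi\mapsto\pi-r-1$ is a bijection from strict maps into $\{1,\dots,H-1\}$ to weak maps into $\{0,\dots,H-2a\}$, and therefore
\[
A_a(-H)=(-1)^{N_a}A_a(H-2a).
\]
Checking that the cover relations of $P$ are exactly the grid adjacencies, so that no identified pair creates a cover with rank jump $\ne1$, is the step I expect to need the most care.

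Put $f(c)=A_a(c-a)$; the identity becomes $f(-c)=(-1)^{N_a}f(c)$. A strict map exists only if $H-1\ge 2a-1$, because $(1,1)>\dots>(a,a)$ is a chain of $2a-1$ elements. Hence $A_a(-H)=0$ for $1\le H\le 2a-1$, i.e.\ $f$ vanishes at $c=-(a-1),\dots,a-1$. It follows that $f=\binom{c+a-1}{2a-1}g$, where the binomial has exactly these $2a-1$ roots and is odd in $c$, so $g(-c)=(-1)^{N_a+1}g(c)$.

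\emph{Parity and degree.} For $a=2k$ one has $N_a=2k(k+1)$, which is even, so $g$ is odd and $g=c\,\qsp$ with $\qsp$ even. For $a=2k+1$ one has $N_a=2k^2+4k+1$, which is odd, so $g=\qsp$ is already even. This gives \eqref{eq:main-qs}, with
\[
\deg\qsp=N_a-(2a-1)-\alpha_a=\lfloor (a-1)^2/2\rfloor
\]
(a direct check in both parities). The polynomial $\qsp$ is nonzero since $A_a(0)=1$, and it is unique because $\Q[c]$ is a domain.

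\emph{Denominator.} Set $x=c+a-1$. Then $f$ is integer-valued in $x$ and vanishes at $x=0,\dots,2a-2$, so Newton expansion gives $f=\sum_{k=2a-1}^{N_a}m_k\binom{x}{k}$ with $m_k\in\Z$. Each term factors as
\[
\binom{x}{k}=\binom{x}{2a-1}\cdot\frac{(2a-1)!}{k!}\,(x-2a+1)\cdots(x-k+1),
\]
so $g$ has coefficients with denominators dividing $k!/(2a-1)!$, and hence dividing $N_a!/(2a-1)!$. This bound survives the change of variable back to $c$ because that substitution is integral. Finally, dividing by the monic factor $c$ when $a$ is even introduces no new denominators. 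This yields $\den(\qsp)\mid N_a!/(2a-1)!$.
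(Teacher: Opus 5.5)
Your proof is correct and is essentially the paper's argument: $A_a$ as the Ehrhart (order) polynomial of the quasi-symmetric order polytope, interior lattice points translated by the staircase $r+1=\delta$, reciprocity giving $A_a(-H)=(-1)^{N_a}A_a(H-2a)$, the root string $c=-(a-1),\dots,a-1$ forced by the chain of length $2a-1$, and parity and degree read off from $N_a \bmod 2$. The differences are minor. You get the range of $\pi-r-1$ from the grading, using the unique bottom $(a,a)$ of rank $0$ and the unique top $(1,1)$ of rank $2a-2$, where the paper uses complementation $\bar\pi$. You get $\den(\qsp)\mid N_a!/(2a-1)!$ directly from the Newton expansion in $x=c+a-1$, whose first $2a-1$ coefficients vanish, where the paper uses $N_a!\,A_a\in\Z[t]$ followed by exact division by the monic polynomial $(2a-1)!\,c^{\alpha_a}\binom{c+a-1}{2a-1}$.
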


\begin{theorem}[Second-kind class]\label{thm:qt}
For each $a\geq2$, $Q_a(h)$ extends uniquely to a rational polynomial of
degree $d_a$. There is a unique nonzero even polynomial
$\qtp(c)\in\Q[c]$ such that
\begin{equation}\label{eq:main-qt}
Q_a(c-a/2)=c\binom{c+a/2-1}{a-1}\qtp(c).
\end{equation}
Writing $\ell_a=d_a-a$, we have
\begin{equation}\label{eq:main-qt-den}
\deg\qtp=\ell_a,
\qquad
\den(\qtp)\ \bigm|\
\begin{cases}
d_a!/(a-1)!,&a\text{ even},\\
2^{\ell_a}d_a!/(a-1)!,&a\text{ odd}.
\end{cases}
\end{equation}
\end{theorem}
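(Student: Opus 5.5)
The plan is to realize $Q_a(h)$ as an Ehrhart polynomial in the free coordinates, obtain a reciprocity identity from a staircase translation of interior points, and then read off the vanishing, the parity, and the denominators from the $h^*$-expansion.

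\emph{Coordinates and polynomiality.} First I would parametrize $\QT_a(h)$ by free coordinates. Each cell $(i,j)$ with $i\ne j$ is either fixed by $\tau$ or lies in a two-element $\tau$-orbit.
\begin{itemize}
\item A fixed cell lies on the anti-diagonal, and \eqref{eq:qt} forces $\pi_{ij}=h$ there.
\item In a two-element orbit, one entry determines the other through $\pi_{\tau(i,j)}=2h-\pi_{ij}$.
\item The $a$ diagonal entries are unrestricted by \eqref{eq:qt}.
\end{itemize}
Let $\eps=1$ if $a$ is odd and $\eps=0$ otherwise. The anti-diagonal carries $a-\eps$ off-diagonal cells, so the number of free coordinates is $a+(a^2-2a+\eps)/2=d_a$. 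After substitution, every inequality in \eqref{eq:pp} becomes a linear inequality of the form $\pm x_u\pm x_v\ \ge\ k\,h$ with $k\in\{-2,\ldots,2\}$, or a bound $0\le x_u\le 2h$. Hence $Q_a(h)=\#(hP\cap\Z^{d_a})$, where $P$ is the polytope cut out by these inequalities at $h=1$.

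I would then show that $P$ is a lattice polytope of full dimension $d_a$. Full dimension should follow from exhibiting a strictly feasible point. For integrality, the constraint matrix has the signed-order-polytope form, with at most two nonzero entries $\pm1$ per row. I would argue that every vertex is a $\{0,1,2\}$-point by a chain argument: at a vertex, the tight constraints link each coordinate, through a signed path, to a bound $0$, $h$ or $2h$. A half-integral vertex could only arise from an odd signed cycle, and I would check that the $\tau$-pairing produces no such cycle, because pairs reflect across the anti-diagonal consistently with the orders on rows and columns. By Ehrhart's theorem this yields uniqueness and degree $d_a$.

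\emph{Reciprocity and the linear factors.} Interior lattice points of $tP$ are exactly the arrays with all inequalities strict, namely $0<\pi_{ij}<2t$ and strict decrease along rows and columns. Consider the staircase map $\pi_{ij}\mapsto\pi_{ij}-(a+1-i-j)-a/2$, possibly adjusted to respect the complementary pairing. It should send such arrays bijectively onto $\QT_a(t-a)$, with half-integral bookkeeping when $a$ is odd, since the anti-diagonal entry $t$ must go to $t-a/2$-compatible values. Combining this with Ehrhart--Macdonald reciprocity gives $Q_a(-t)=(-1)^{d_a}Q_a(t-a)$. Writing $f(c)=Q_a(c-a/2)$, this reads $f(-c)=(-1)^{d_a}f(c)$.

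Two consequences follow.
\begin{itemize}
\item For $1\le t\le a-1$ the interior of $tP$ is empty, because a strictly decreasing chain of length $a$ does not fit. Hence $f$ vanishes at $c=a/2-t$ for these $t$, which are precisely the roots of $\binom{c+a/2-1}{a-1}$.
\item The symmetry $f(-c)=\pm f(c)$, compared with the parity of $c\binom{c+a/2-1}{a-1}$ (whose roots are symmetric about $0$), forces the remaining factor $c$. This uses that an odd function vanishes at $0$; in the even-$a$ case, where $0$ is already a root of the binomial, I would check the multiplicity two directly. It then leaves $\qtp$ even, with $\deg\qtp=d_a-a=\ell_a$.
\end{itemize}
Uniqueness of $\qtp$ is clear, and $\qtp\neq0$ because $Q_a(0)=1$.

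\emph{Denominators.} Write $Q_a(h)=\sum_i h^*_i\binom{h+d_a-i}{d_a}$ with $h^*_i\in\Z$, which holds since $P$ is a lattice polytope. Each term is $\frac{1}{d_a!}$ times a product of $d_a$ consecutive linear factors in $h$, and the common factor $c\binom{c+a/2-1}{a-1}$ absorbs $a$ of them together with $1/(a-1)!$. What remains is $\frac{(a-1)!}{d_a!}$ times a product of $\ell_a$ linear factors $(c+r)$. Here $r\in\Z$ when $a$ is even and $r\in\tfrac12+\Z$ when $a$ is odd; in the odd case, clearing $2$ from each factor gives the extra $2^{\ell_a}$.

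The delicate part is that the quotient must be taken termwise. This requires checking that each $h^*$-term is itself divisible by the common factor, or else working in the basis $\binom{c+a/2-1+j}{d_a}$ adapted to the symmetric roots.

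\emph{Main obstacle.} I expect the main obstacle to be the staircase bijection for interior points. The translation must simultaneously preserve strict monotonicity, the complementary identity $\pi+\pi_\tau=2h$ off the main diagonal, and the free diagonal entries. When $a$ is odd, the shift is half-integral, so one has to verify that the image really consists of lattice points of the smaller dilate. My first attempt would shift each entry by $\min$-distance to the boundary, that is, by $i+j-1$ or its complement according to which side of the anti-diagonal the cell lies on, and then check the pairing identity cell by cell.
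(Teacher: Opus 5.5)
Your architecture matches the paper's: an Ehrhart realization in $d_a$ free coordinates, reciprocity through a staircase translation, parity, then denominators. But the step you flag as the main obstacle is left open, and the maps you propose do not work.

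\textbf{The staircase translation.} The map $\pi_{ij}\mapsto\pi_{ij}-(a+1-i-j)-a/2$ fails in three ways:
\begin{itemize}
\item It lowers each complementary sum $\pi_p+\pi_{\tau(p)}$ by only $a$, because $\eta_{\tau(p)}=-\eta_p$.
\item It sends the anti-diagonal value $t$ to $t-a/2$, and is non-integral for odd $a$.
\item Already for $a=2$, $t=2$, it sends the unique interior point $\begin{pmatrix}3&2\\2&1\end{pmatrix}$ to the all-ones matrix, not to the zero matrix, which is the only element of $\QT_2(0)$.
\end{itemize}
Your fallback subtracts $i+j-1$ on one side of the anti-diagonal and $2a+1-i-j$ on the other. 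That vector $v$ is $\tau$-invariant, so it lowers $\pi_p+\pi_{\tau(p)}$ by the non-constant amount $2v_p$.

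The correct map is the integral translation $\pi\mapsto\pi-\delta$ with $\delta_{ij}=2a+1-i-j$ (in centered coordinates, $z\mapsto z-\eta$), for both parities of $a$.
\begin{itemize}
\item Strict chains to $(a,a)$ and from $(1,1)$ give $\delta_{ij}\le\pi_{ij}\le2(t-a)+\delta_{ij}$. Alternatively use the complement $\bar\pi_{ij}=2t-\pi_{\rho(i,j)}$, which respects the pairing since $\rho\tau=\tau\rho$.
\item The adjacent differences of $\delta$ all equal $1$.
\item Since $\delta_{ij}+\delta_{\tau(i,j)}=2a$, complementary sums drop from $2t$ to $2(t-a)$, and anti-diagonal entries drop from $t$ to $t-a$.
\end{itemize}
No half-integral bookkeeping occurs at this stage; $a/2$ enters only through $c=h+a/2$. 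Without this identity, emptiness gives you the roots of $\binom{c+a/2-1}{a-1}$. It does not give $Q_a(-t)=(-1)^{d_a}Q_a(t-a)$, hence neither the factor $c$ nor evenness. Once you have it, check that $d_a\equiv a\pmod 2$. Then $Q_a(c-a/2)/\binom{c+a/2-1}{a-1}$ is odd. This produces the factor $c$ for both parities at once, including the double root at $0$ for even $a$, so no separate multiplicity check is needed.

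\textbf{The denominator step.} This fails as planned, because no $h^*$-term $\binom{h+d_a-i}{d_a}$ is divisible by the whole factor $c\binom{c+a/2-1}{a-1}$.
\begin{itemize}
\item For odd $a$, the roots of each term are integers, while $c=0$ means $h=-a/2\notin\Z$.
\item For even $a$, each term has only simple roots, while $h=-a/2$ is a double root of the factor.
\end{itemize}
The basis $\binom{c+a/2-1+j}{d_a}$ is the same basis reindexed, so it does not help. Instead, use only $d_a!\,Q_a\in\Z[h]$, which your $h^*$-expansion gives, and divide the whole polynomial.
\begin{itemize}
\item For even $a$, $d_a!\,Q_a(c-a/2)\in\Z[c]$ is divisible in $\Q[c]$ by the monic integer polynomial $c\prod_{j=1}^{a-1}(c-a/2+j)$.
\item For odd $a$, put $c=x/2$. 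Then $2^{d_a}d_a!\,Q_a((x-a)/2)\in\Z[x]$ is divisible by the monic polynomial $x\prod_{j=1}^{a-1}(x-a+2j)$.
\end{itemize}
Exact division by a monic integer polynomial preserves integrality. This gives $\frac{d_a!}{(a-1)!}\qtp\in\Z[c]$ for even $a$. For odd $a$, after substituting $x=2c$, it gives $\frac{2^{\ell_a}d_a!}{(a-1)!}\qtp\in\Z[c]$. This is the paper's argument.

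\textbf{The integrality check.} The check ``no odd signed cycle'' would fail. For $a=3$, the constraints $\pi_{12}\ge\pi_{22}\ge\pi_{23}=2h-\pi_{12}$ form such a cycle. Both are tight at the vertex $\pi=\begin{pmatrix}2&1&1\\1&1&1\\1&1&0\end{pmatrix}$ with $h=1$. Vertices are nonetheless integral, because every two-variable constraint has right-hand side $0$ or $\pm2h$, so an odd cycle forces the value $h$. The paper uses the signed threshold decomposition instead.
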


\begin{theorem}[Symmetric second-kind class]\label{thm:st}
For each $a\geq2$, $S_a(h)$ extends uniquely to a rational polynomial of
degree $e_a$. Set $\beta_a=0$ if $a\equiv3\pmod4$ and $\beta_a=1$
otherwise. There is a unique nonzero even polynomial
$\stp(c)\in\Q[c]$ such that
\begin{equation}\label{eq:main-st}
S_a(c-a/2)=c^{\beta_a}\binom{c+a/2-1}{a-1}\stp(c).
\end{equation}
With $m_a=e_a-a+1-\beta_a$, its degree and denominator satisfy
\begin{equation}\label{eq:main-st-den}
\deg\stp=m_a,
\qquad
\den(\stp)\ \bigm|\
\begin{cases}
e_a!/(a-1)!,&a\text{ even},\\
2^{m_a}e_a!/(a-1)!,&a\text{ odd}.
\end{cases}
\end{equation}
\end{theorem}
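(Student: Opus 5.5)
The plan is to realize $S_a(h)$ as the Ehrhart polynomial of a lattice polytope, and then get the zeros, the symmetry and the denominator bound from Ehrhart--Macdonald reciprocity together with the $h^*$-expansion.

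\textbf{Step 1: free coordinates.} I parametrize $\ST_a(h)$ by the orbits of the group $\langle s,\tau\rangle$ acting on $[a]^2$.
\begin{itemize}
\item A diagonal cell $(i,i)$ carries no complementarity relation, so each of the $a$ diagonal entries is free. I write $\pi_{ii}=h+z_i$.
\item An off-diagonal anti-diagonal cell is fixed by $\tau$, so its entry is forced to equal $h$.
\item Every other off-diagonal cell lies in an orbit $\{(i,j),(j,i),\tau(i,j),\tau(j,i)\}$ of size four. On such an orbit I write $\pi=h+y$ on one $s$-pair and $\pi=h-y$ on the other.
\end{itemize}
The number of size-four orbits is $(a^2-2a+[a\text{ odd}])/4$. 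Adding the $a$ diagonal coordinates gives $e_a$ coordinates in total.

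In these coordinates every constraint in \eqref{eq:pp} becomes homogeneous in $h$. The bounds $0\le\pi\le 2h$ become $|y|,|z|\le h$. The monotonicity relations become signed order relations of the forms $y_u\ge y_v$ and $y_u\ge -y_v$, because the forced entries equal $h$, which corresponds to the coordinate value $0$. Hence $S_a(h)=\#(hP\cap\Z^{e_a})$ for a full-dimensional polytope $P\subset[-1,1]^{e_a}$.

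I then check that $P$ is a lattice polytope. The constraint matrix has rows $e_u\pm e_v$ and $e_u$, and I expect every vertex to have coordinates in $\{-1,0,1\}$. The argument would take a vertex and look at the components of its tight relations. A component not containing a tight box constraint can be shifted along $\pm$ signs in both directions, which contradicts being a vertex. If this integrality fails, I would fall back to passing to $2P$; that costs extra factors of $2$, which is consistent with the $2^{m_a}$ appearing for odd $a$.

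With integrality in hand, Ehrhart's theorem shows that $S_a$ extends uniquely to a polynomial of degree $e_a$. Its leading coefficient is $\operatorname{vol}P>0$.

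\textbf{Step 2: staircase bijection and reciprocity.} The interior lattice points of $hP$ are exactly the arrays that satisfy all inequalities strictly. I subtract a staircase in the original entries, roughly $\pi_{ij}\mapsto \pi_{ij}-(\text{distance of }(i,j)\text{ to the boundary in the poset})$, and re-centre. I would choose the staircase symmetric under $s$ and antisymmetric about $h$ under $\tau$, so that it preserves the relations \eqref{eq:qt} and \eqref{eq:sym}. This should give a bijection between interior points of $hP$ and all points of $(h-a)P$. In particular there are no interior points for $1\le h\le a-1$.

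Reciprocity $L(-t)=(-1)^{e_a}L^\circ(t)$ then yields two facts:
\begin{itemize}
\item zeros of $S_a$ at $h=-1,\dots,-(a-1)$, which in the variable $c=h+a/2$ are exactly the roots of $\binom{c+a/2-1}{a-1}$;
\item the symmetry $S_a(-c-a/2)=(-1)^{e_a}S_a(c-a/2)$.
\end{itemize}
The binomial factor has parity $(-1)^{a-1}$ under $c\mapsto -c$, so the quotient has parity $(-1)^{e_a-a+1}$. When $e_a-a+1$ is odd, the quotient is an odd polynomial and is therefore divisible by $c$. A residue check mod $4$ shows this happens exactly when $a\not\equiv3\pmod 4$, which matches $\beta_a$. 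After removing that factor, the residual $\stp$ is even, nonzero (since $\operatorname{vol}P>0$), and has degree $e_a-(a-1)-\beta_a=m_a$. Uniqueness is clear by cancellation in $\Q[c]$.

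\textbf{Step 3: denominators.} I write $S_a(h)=\sum_i h^*_i\binom{h+e_a-i}{e_a}$ with $h^*_i\in\Z$. Then
\[
e_a!\,S_a(h)=\prod(\text{monic linear factors})\cdot(\text{integer combination}),
\]
and the plan is to show that the factor $(h+1)\cdots(h+a-1)$ divides it in $\Z[h]$. This is Gauss's lemma applied to monic divisors of a polynomial with integer coefficients. Dividing out gives $\den$ of the cofactor dividing $e_a!/(a-1)!$. Removing $c$ when $\beta_a=1$ keeps integrality, again because $c$ is monic.

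For even $a$, the substitution $h=c-a/2$ is integral and the bound follows directly. For odd $a$, the half-integral shift introduces powers of $2$. I would control these by rescaling to $2c$: a polynomial of degree $m_a$ in $c-\tfrac12$ with integer coefficients has denominator dividing $2^{m_a}$. This is the bound in \eqref{eq:main-st-den}.

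\textbf{Main obstacles.} I expect the hardest parts to be Step 1's integrality of the signed-order polytope and the construction of a staircase that respects both $s$-symmetry and $\tau$-complementarity with the correct shift $a$. Getting that shift exactly right is essential, because it fixes both the centre $c=h+a/2$ and the count of $a-1$ consecutive roots. I would verify the shift on $a=2,3$ first; for example, $S_2(h)$ should come out as $(h+1)^2$.
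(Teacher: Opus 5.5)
Your outline matches the paper's: the centered symmetric signed-order polytope of dimension $e_a$, a staircase translation of interior points, reciprocity with shift $r=a$, and monic division after rescaling $c$ by $2$. However, Step~2, the only non-formal step, is asserted rather than proved, and you leave both the staircase and the shift open. This is a genuine gap. The $a-1$ roots, the center $a/2$, and the reflection $S_a(-c-a/2)=(-1)^{e_a}S_a(c-a/2)$ all rest on it, and hence so do the factor $c^{\beta_a}$ and the evenness of $\stp$. What must be shown is that $hP$ has no interior lattice points for $1\le h<a$, and that $\relint(hP)\cap\Z^{e_a}=\eta+\bigl((h-a)P\cap\Z^{e_a}\bigr)$ for $h\ge a$. ``Distance to the boundary in the poset'' of your signed free coordinates does not by itself fix the translation vector, nor does it prove that the map is onto.

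The missing argument is short if you work in the original $a\times a$ array.
\begin{itemize}
\item Take $\delta_{ij}=2a+1-i-j$ and its centered form $\eta_{ij}=a+1-i-j$. The matrix $\eta$ is symmetric, satisfies $\eta_{\tau(i,j)}=-\eta_{ij}$, and vanishes on the anti-diagonal, so it is a lattice point of your model.
\item The point $\eta/a$ satisfies every defining inequality strictly. This gives the full dimension you assumed. By Lemma~\ref{lem:strict}, it also identifies interior lattice points of $hP$ with integer arrays having $1\le\pi_{ij}\le2h-1$ and decreases of at least one along rows and columns.
\item Summing these decreases along monotone paths to $(a,a)$ and from $(1,1)$ gives $\delta_{ij}\le\pi_{ij}\le2h+1-i-j=2(h-a)+\delta_{ij}$. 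The paper instead gets the upper bound from the complement $2h-\pi_{\rho(i,j)}$. At $(1,1)$ this forces $h\ge a$.
\item For $h\ge a$, the array $\pi-\delta$ has entries in $[0,2(h-a)]$ and is weakly decreasing and symmetric. It satisfies the complementarity with sum $2(h-a)$ because $\delta_{ij}+\delta_{\tau(i,j)}=2a$, so it lies in $\ST_a(h-a)$.
\item Conversely, adding $\delta$ to an element of $\ST_a(h-a)$ restores strict decreases, the bounds $[1,2h-1]$, and the complementary sum $2h$. In centered coordinates the bijection is $z\mapsto z-\eta$.
\end{itemize}

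The rest of your proposal is sound. Your tight-component argument does put all vertices in $\{-1,0,1\}^{e_a}$, once you note that only components whose common absolute value lies in $(0,1)$ need to be shifted; the paper uses a threshold decomposition into matrices $T_t(z)$ instead. Removing the monic factor $c=h+a/2$ when $a\equiv1\pmod4$ is legitimate by the content form of Gauss's lemma, even though $h+a/2\notin\Z[h]$. Drop the $2P$ fallback, however: a non-lattice $P$ would give only a quasi-polynomial. The $2^{m_a}$ comes from the half-integral center, not from the vertices.
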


Every denominator prime in these theorems is at most the corresponding
dimension $N_a$, $d_a$, or $e_a$. The extra powers of $2$ in
\eqref{eq:main-qt-den} and \eqref{eq:main-st-den} do not change this
conclusion, since the last two dimensions are at least $2$. The bounds
need not be sharp; they give an explicit restriction on the prime divisors
of all coefficient denominators.

\subsection{Notation}
The three counting functions and their residual polynomials are paired as
follows:
\[
\begin{array}{c|ccc}
\text{family}&\text{counting function}&\text{residual}&\text{dimension}\\
\hline
\text{quasi-symmetric}&A_a&p_a&N_a\\
\text{second-kind}&Q_a&q_a&d_a\\
\text{symmetric second-kind}&S_a&s_a&e_a
\end{array}
\]
We write $\one$ for the all-ones matrix. The two staircases are
\[
\delta_{ij}=2a+1-i-j,\qquad
\eta_{ij}=a+1-i-j,\qquad \delta=\eta+a\one.
\]
For a partition $\pi$ in a box of height $H$, its complement is denoted
by $\bar\pi$:
\[
\bar\pi_{ij}=H-\pi_{a+1-i,a+1-j}.
\]
The notation $\pi'=\pi-\delta$ is used when subtracting the staircase.
For a second-kind partition of half-height $h$, we put
\[
z=\pi-h\one.
\]
If $\pi'$ has half-height $h-a$, we similarly put
$z'=\pi'-(h-a)\one$. Thus $\pi,\pi'$ are uncentered partitions and
$z,z'$ are their centered matrices, respectively.

In the polytope models, $u$ and $z$ denote real matrices.
For a fixed matrix, the associated threshold matrices are denoted
$E_t(u)$ and $T_t(z)$, with entries $E_t(u)_{ij}$ and $T_t(z)_{ij}$.

\section{Lattices and Ehrhart theory}
\label{sec:prelim}

\begin{definition}
Let $V$ be a real vector space of dimension $d$. A full lattice
$\Lambda\subset V$ is the set of all integer linear combinations of some
real basis of $V$. A convex combination of $v_1,\ldots,v_k$ is a sum
$\sum_{i=1}^k\lambda_i v_i$ with $\lambda_i\geq0$ and
$\sum_{i=1}^k\lambda_i=1$; their convex hull is the set of all such
combinations. A polytope is the convex hull of finitely many points.
A vertex, or extreme point, of a convex set $P$ is a point $x\in P$
such that $x=\lambda v+(1-\lambda)w$, with $v,w\in P$ and
$0<\lambda<1$, implies $v=w=x$.
A polytope is a lattice polytope with respect to $\Lambda$ if every vertex belongs
to $\Lambda$. Its dimension is the dimension of its affine hull.
For $t\geq0$, its dilation is $tP=\{tx:x\in P\}$. The relative interior
$\relint(P)$ is the interior taken in $\aff(P)$, not in a larger ambient
space.
\end{definition}

Choosing a lattice basis identifies $(V,\Lambda)$ with $(\R^d,\Z^d)$.
For the matrix spaces used below, we obtain such coordinates by selecting
independent entries.

\begin{lemma}[Strict inequalities and relative interior]\label{lem:strict}
Let $V$ be a finite-dimensional real vector space and let
\[
P=\{x\in V:f_j(x)\geq0\text{ for }1\leq j\leq M\},
\]
where each $f_j$ is affine. Suppose that some $x^*\in P$ satisfies
$f_j(x^*)>0$ for every $j$. Then $\aff(P)=V$, and
\[
\relint(P)=\{x\in V:f_j(x)>0\text{ for every }j\}.
\]
\end{lemma}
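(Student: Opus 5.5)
The argument is elementary convex geometry and has two parts: first show that $P$ is full-dimensional, then identify its interior with the strict-inequality set. Write $U=\{x\in V:f_j(x)>0\text{ for every }j\}$; by hypothesis $x^*\in U$.

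\emph{Full dimension.} Each $f_j$ is affine, hence continuous, so $U$ is an open subset of $V$. It is nonempty because it contains $x^*$, and clearly $U\subseteq P$. A nonempty open set in $V$ has affine hull equal to $V$: it contains a small ball around $x^*$, and so it contains $x^*$ together with $x^*+\varepsilon b_k$ for a basis $(b_k)$ of $V$. Therefore $\aff(P)=V$, and $\relint(P)$ is simply the topological interior of $P$ in $V$.

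\emph{Inclusion $U\subseteq\relint(P)$.} Since $U$ is open and contained in $P$, it lies in the interior of $P$.

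\emph{Inclusion $\relint(P)\subseteq U$.} Let $x$ be interior to $P$, and suppose for contradiction that $f_j(x)=0$ for some $j$. Write $f_j=\lambda_j+c_j$ with $\lambda_j$ linear. If $\lambda_j=0$, then $f_j$ is the constant $c_j$, so $c_j=f_j(x)=0$; this contradicts $f_j(x^*)>0$. Hence $\lambda_j\neq0$, and we can choose $v$ with $\lambda_j(v)<0$. Then $f_j(x+\varepsilon v)=\varepsilon\lambda_j(v)<0$ for every $\varepsilon>0$. This exhibits points arbitrarily close to $x$ lying outside $P$, which contradicts $x$ being interior. So $f_j(x)>0$ for all $j$, that is, $x\in U$.

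The only point requiring care is the last step, where $f_j$ might be constant. The hypothesis on $x^*$ rules out a constant that vanishes.
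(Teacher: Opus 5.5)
Your proof is correct and follows the same route as the paper. Strictly feasible points form an open set inside $P$, which gives full dimension and the inclusion of the strict set in the interior. A point with $f_j(x)=0$ is then pushed outside $P$ along a direction that decreases $\lambda_j$.

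The only difference is that the paper uses the explicit direction $x-x^*$. Since $f_j\bigl(x-\epsilon(x^*-x)\bigr)=-\epsilon f_j(x^*)<0$, this choice absorbs your separate case $\lambda_j=0$ automatically.
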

\begin{proof}
There are finitely many inequalities, and each $f_j$ is continuous.
Consequently a sufficiently small open ball in $V$ around $x^*$ is
contained in $P$, which proves $\aff(P)=V$. The same argument shows that
every point satisfying all inequalities strictly is interior.
Conversely, suppose $f_j(x)=0$ for some $x\in P$. Write
$f_j(y)=b_j+\lambda_j(y)$, with $\lambda_j$ linear. The strict inequality
at $x^*$ implies
$\lambda_j(x^*-x)=f_j(x^*)-f_j(x)>0$. For every $\epsilon>0$,
\[
f_j\bigl(x-\epsilon(x^*-x)\bigr)=-\epsilon f_j(x^*)<0.
\]
Thus every neighborhood of $x$ in $V$ contains a point outside $P$, so
$x$ is not interior. This proves both inclusions.
\end{proof}

We recall Ehrhart's polynomiality theorem \cite{Ehrhart} and
Ehrhart--Macdonald reciprocity \cite{Macdonald}, in the formulations
of Beck and Robins \cite[Theorems 3.8 and 4.1]{BeckRobins}.

\begin{theorem}[Ehrhart {\cite[Theorem~3.8]{BeckRobins}}]\label{thm:ehrhart}
Let $P$ be a full-dimensional lattice polytope in a rank-$d$ lattice
$\Lambda$. For integers $t\geq0$, set $L_P(t)=\#(tP\cap\Lambda)$.
There is a unique polynomial $L_P\in\Q[t]$ of degree exactly $d$ with
these values, and $L_P(0)=1$.
\end{theorem}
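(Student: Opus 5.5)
The statement is Ehrhart's theorem as formulated in Beck and Robins, so the paper cites rather than reproves it. A self-contained proof would go through triangulation and counting lattice points in half-open simplices.

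\emph{Reduction.} First choose a lattice basis to identify $(V,\Lambda)$ with $(\R^d,\Z^d)$. Then triangulate $P$ into $d$-simplices whose vertices are vertices of $P$; this is possible because $P$ is the convex hull of its finitely many vertices, for instance via a pulling or placing triangulation. Inclusion--exclusion over the faces of the triangulation writes $L_P(t)$ as an alternating sum of counts for relatively open lattice simplices of various dimensions. Equivalently, we can use a disjoint decomposition of $P$ into half-open simplices. Either way it suffices to show the following: for a lattice simplex $\Delta$ of dimension $k$, the function $t\mapsto\#(t\,\relint\Delta\cap\Z^d)$ agrees for $t\geq1$ with a polynomial of degree $k$.

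\emph{Cone over a simplex.} Let the vertices of $\Delta$ be $v_0,\dots,v_k$. Cone over $\Delta$ by setting $w_i=(v_i,1)\in\Z^{d+1}$. Every lattice point of the open cone is uniquely of the form $p+\sum m_i w_i$ with $m_i\in\NN$. Here $p$ ranges over the finite set of lattice points of the half-open fundamental parallelepiped $\{\sum\lambda_i w_i:0<\lambda_i\leq1\}$. This gives the generating function
\[
\sum_{t\geq1}\#(t\,\relint\Delta\cap\Z^d)\,z^t=\frac{h^*(z)}{(1-z)^{k+1}},
\]
where $h^*$ has nonnegative coefficients, degree at most $k+1$, and $h^*(1)\geq1$. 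A rational function of this shape has coefficients given by a polynomial in $t$ of degree exactly $k$. The leading coefficient is $h^*(1)/k!>0$, and agreement holds for $t\geq1$. I will need a little care with the degree bound on the numerator so that there is no polynomial correction term at small $t$.

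\emph{Degree, constant term and uniqueness.} Summing the pieces, only the $d$-dimensional open simplices contribute in degree $d$. Their positive leading coefficients add, so $\deg L_P=d$ exactly; indeed the leading coefficient is the volume of $P$. Uniqueness is immediate, since two polynomials agreeing at infinitely many integers coincide. The main obstacle is $L_P(0)=1$. For this I will evaluate the inclusion--exclusion at $t=0$. Each relatively open simplex of dimension $k$ contributes $(-1)^k$, since its polynomial at $0$ equals $(-1)^k$ by the reciprocity/generating-function evaluation $h^*$-symmetry argument. Alternatively, I can compute directly from the cone: the interior polynomial of a $k$-simplex is $(-1)^k$ times the closed one at $-t$. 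The total is then the Euler characteristic $\sum_F(-1)^{\dim F}=1$ of the triangulated ball $P$. I expect this Euler-characteristic bookkeeping to be the delicate step, and I would handle it by induction on $d$ using a half-open decomposition to avoid topology.
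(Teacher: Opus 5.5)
The paper gives no proof of this statement; it quotes Beck and Robins. So the question is whether your outline stands on its own. The triangulation, the cone over each simplex, the resulting count $\sum_{j=1}^{k+1}c_j\binom{t-j+k}{k}$ for $t\geq1$, the degree argument and uniqueness are all fine. Two small corrections. First, the relatively open faces give a disjoint sum, so it is a plain sum, not an alternating one. Second, the value $(-1)^k$ at $0$ needs no reciprocity: $\sum_iw_i$ is the only lattice point of height $k+1$ in $\{\sum\lambda_iw_i:0<\lambda_i\leq1\}$, so $c_{k+1}=1$ and the polynomial at $0$ is $\binom{-1}{k}=(-1)^k$.

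The genuine gap is the step you flag yourself. Your pieces are relatively open simplices of every dimension, so the decomposition $tP=\bigsqcup_F t\,\relint F$ fails at $t=0$. Your argument therefore gives agreement with a polynomial only for $t\geq1$. Agreement at $t=0$ is required by the statement and is equivalent to $L_P(0)=1$. In your approach it rests entirely on the identity $\sum_F(-1)^{\dim F}=1$ over the faces of the triangulation. That identity is true, but ``induction on $d$ using a half-open decomposition'' is not yet an argument. As written, the theorem is not proved.

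Half-open simplices are not merely an equivalent alternative here. Used with full-dimensional pieces only, they make the $t=0$ problem disappear.

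\emph{The decomposition.} Fix a generic point $q$ in the interior of $P$. Let $H_i$ be $\Delta_i$ with its facets visible from $q$ removed. Then $P=\bigsqcup_iH_i$: a point $x$ lies in $H_i$ for the unique $i$ with $x+\epsilon(q-x)\in\Delta_i$ for all small $\epsilon>0$. Exactly one $H_i$ is closed, namely the one containing $q$. No $H_i$ loses all $d+1$ facets, since no point lies strictly beyond every facet of a simplex.

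\emph{The cone.} Cone over $P$ and assign the apex to the closed piece. In the fundamental parallelepiped of the cone over $H_i$, take $\lambda_j\in(0,1]$ when the facet opposite $v_j$ is removed, and $\lambda_j\in[0,1)$ otherwise. All heights are then integers in $[0,d]$, and height $0$ occurs only in the closed piece, at the apex. Hence $\sum_{t\geq0}\#(tP\cap\Lambda)z^t=h^*(z)/(1-z)^{d+1}$ with $\deg h^*\leq d$. The coefficients therefore agree with a polynomial for every $t\geq0$, and $L_P(0)=\#(0P\cap\Lambda)=1$ is automatic.

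If you prefer to keep your open-simplex bookkeeping, the same decomposition proves your identity. The faces of $\Delta_i$ whose relative interiors lie in $H_i$ are exactly those whose vertex sets contain the set $J_i$ of vertices opposite removed facets. The sum $\sum_{S\supseteq J_i,\,S\neq\varnothing}(-1)^{|S|-1}$ equals $1$ when $J_i=\varnothing$ and $0$ whenever $\varnothing\neq J_i\neq\{v_0,\dots,v_d\}$.
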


\begin{theorem}[Ehrhart--Macdonald {\cite[Theorem~4.1]{BeckRobins}}]\label{thm:reciprocity}
Under the hypotheses of Theorem~\ref{thm:ehrhart}, for each positive
integer $t$ one has
\begin{equation}\label{eq:EM}
L_P(-t)=(-1)^d L_P^\circ(t),\qquad
L_P^\circ(t)=\#(\relint(tP)\cap\Lambda).
\end{equation}
The left side is evaluation of the polynomial at a negative integer.
\end{theorem}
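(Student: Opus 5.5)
The plan is to reduce to lattice simplices by triangulation, prove reciprocity for relatively open simplices through the cone generating function, and then reassemble using the Euler relation of the triangulation. Fix lattice coordinates so that $(V,\Lambda)=(\R^d,\Z^d)$. Choose a triangulation $\mathcal T$ of $P$ whose vertices are vertices of $P$; this is possible since $P$ is the convex hull of its finitely many lattice vertices. Then $P$ is the disjoint union of the relatively open faces $\relint(\Delta)$, $\Delta\in\mathcal T$. Dilation preserves this decomposition, so
\[
L_P(t)=\sum_{\Delta\in\mathcal T}L^\circ_\Delta(t),\qquad
L_P^\circ(t)=\sum_{\Delta\in\mathcal T,\ \relint\Delta\subset\relint P}L^\circ_\Delta(t).
\]
Here $L^\circ_\Delta(t)=\#(\relint(t\Delta)\cap\Z^d)$ for $t\geq1$, and we set $L^\circ_\Delta(0)=0$ unless $\Delta$ is a point.

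For a lattice $k$-simplex $\Delta$ with vertices $v_0,\dots,v_k$, consider the cone over $\Delta\times\{1\}$ in $\R^{d+1}$, generated by $w_i=(v_i,1)$. Its lattice points are uniquely of the form $p+\sum n_iw_i$, with $n_i\in\NN$ and $p$ in the half-open parallelepiped $\Pi=\{\sum\lambda_iw_i:0\leq\lambda_i<1\}$. This gives
\[
\sum_{t\ge0}L_\Delta(t)z^t=\frac{\sum_{p\in\Pi\cap\Z^{d+1}}z^{p_{d+1}}}{(1-z)^{k+1}}.
\]
In particular $L_\Delta$ is a polynomial of degree $k$, which recovers Theorem~\ref{thm:ehrhart}. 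For the open cone the parallelepiped is replaced by $\Pi^\circ=\{\sum\lambda_iw_i:0<\lambda_i\leq1\}$. The involution $p\mapsto\sum w_i-p$ maps the lattice points of $\Pi$ bijectively onto those of $\Pi^\circ$ and sends height $j$ to $k+1-j$. Hence $\sum_{t\geq1}L^\circ_\Delta(t)z^t=(-1)^{k+1}F(1/z)$, where $F(z)=\sum_{t\ge0}L_\Delta(t)z^t$. The standard lemma that $\sum_{t\ge1}f(-t)z^t=-\sum_{t\ge0}f(t)z^{-t}$ as rational functions, for any polynomial $f$, then yields the simplex reciprocity $L_\Delta(-t)=(-1)^kL^\circ_\Delta(t)$ for $t\ge1$.

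To globalize, write $L^\circ_\Delta=\sum_{\Gamma\subseteq\Delta}(-1)^{\dim\Delta-\dim\Gamma}L_\Gamma$ by inclusion–exclusion over faces, so each $L^\circ_\Delta$ is a polynomial. Applying simplex reciprocity term by term gives
\[
L_P(-t)=\sum_{\Delta\in\mathcal T}(-1)^{\dim\Delta}L_\Delta(t)
=\sum_{\Gamma\in\mathcal T}L^\circ_\Gamma(t)\sum_{\Delta\supseteq\Gamma}(-1)^{\dim\Delta}.
\]
The main obstacle is the inner alternating sum over the star of $\Gamma$. The link of $\Gamma$ in $\mathcal T$ is a sphere when $\relint\Gamma\subset\relint P$ and a ball otherwise. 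Therefore the sum equals $(-1)^d$ in the first case and $0$ in the second. I would establish this by applying the Euler relation to the local triangulation of a small neighbourhood of a point of $\relint\Gamma$: it is a $d$-ball, meeting $\partial P$ or not. Substituting this value leaves exactly $(-1)^dL_P^\circ(t)$, which is \eqref{eq:EM}.
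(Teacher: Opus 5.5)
The paper gives no proof of this statement. It imports it from Beck--Robins, where it is deduced from Stanley's reciprocity theorem for rational cones applied to the cone over $P$. The simplicial case of that theorem is the same parallelepiped involution $p\mapsto\sum w_i-p$ that you use.

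Your route stays inside $P$: you triangulate, prove reciprocity simplex by simplex, and reassemble with local Euler characteristics. This is close in spirit to Macdonald's original cell-complex argument. The outline is correct and would make the cited black box self-contained, except at the step you yourself single out.

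Writing each $\Delta\supseteq\Gamma$ as a join $\Gamma*\sigma$, with $\sigma$ in $\operatorname{lk}\Gamma$ or empty, gives
\[
\sum_{\Delta\supseteq\Gamma}(-1)^{\dim\Delta}
=(-1)^{\dim\Gamma+1}\,\tilde\chi(\operatorname{lk}\Gamma).
\]
So what you need is $\tilde\chi(\operatorname{lk}\Gamma)=(-1)^{d-\dim\Gamma-1}$ for interior faces and $0$ for boundary faces. Observing that a small neighbourhood of a point of $\relint\Gamma$ is a $d$-ball does not supply this, because it holds in both cases. The real input is that, in a triangulation of a convex $d$-polytope, links of interior faces are PL spheres and links of boundary faces are PL balls, together with topological invariance of the Euler characteristic. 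Either cite this explicitly, or bypass it. One way to bypass it is to decompose $P$ and $\relint P$ into complementary half-open simplices with respect to a generic interior point; your involution then exchanges the two kinds of half-open parallelepipeds directly.

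There is also a minor slip. With your convention $L^\circ_\Delta(0)=0$ for non-points, the first displayed decomposition fails at $t=0$. Moreover, the inclusion--exclusion polynomial you later use actually has $L^\circ_\Delta(0)=(-1)^{\dim\Delta}$. Since only $t\geq1$ is ever used, simply state that decomposition for $t\geq1$.
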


\section{Factorization and denominator bounds}
\label{sec:general}

\begin{theorem}\label{thm:general}
Let $P\subset V$ be a full-dimensional lattice polytope in a rank-$d$
lattice $\Lambda$. Suppose that $r\geq1$ is an integer and
$\omega\in\Lambda$ satisfies
\begin{equation}\label{eq:translation-general}
\relint(tP)\cap\Lambda
=\omega+\bigl((t-r)P\cap\Lambda\bigr)\qquad(t\in\Z,\ t\geq r),
\end{equation}
and that $\relint(tP)\cap\Lambda$ is empty for $1\leq t<r$.
Define
\begin{equation}\label{eq:general-FB}
F(c)=L_P(c-r/2),\qquad
B_r(c)=\binom{c+r/2-1}{r-1}.
\end{equation}
Let $\eps\in\{0,1\}$ be congruent to $d-r+1$ modulo $2$. Then a unique
nonzero even polynomial $p\in\Q[c]$ satisfies
\begin{equation}\label{eq:general-factor}
F(c)=c^\eps B_r(c)p(c).
\end{equation}
Its degree is $\ell=d-r+1-\eps\geq0$, and
\begin{equation}\label{eq:general-den}
\den(p)\ \bigm|\
\begin{cases}
d!/(r-1)!,&r\text{ even},\\
2^\ell d!/(r-1)!,&r\text{ odd}.
\end{cases}
\end{equation}
In particular, every denominator prime is at most $\max\{2,d\}$.
\end{theorem}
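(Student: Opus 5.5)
The plan is to turn Ehrhart--Macdonald reciprocity into a polynomial functional equation for $F$, use the emptiness hypothesis to locate $r-1$ forced zeros, and then bound denominators through the integral $h^*$-expansion of $L_P$.

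\textbf{Functional equation.} By Theorem~\ref{thm:reciprocity} and \eqref{eq:translation-general}, for every integer $t\geq r$ we have
\[
L_P(-t)=(-1)^d\#\bigl(\relint(tP)\cap\Lambda\bigr)=(-1)^d L_P(t-r),
\]
since translation by $\omega\in\Lambda$ is a bijection on lattice points. Both sides are polynomials in $t$ and agree at infinitely many integers, so $L_P(-x)=(-1)^dL_P(x-r)$ identically. Substituting $x=c+r/2$ gives
\[
F(-c)=L_P(-c-r/2)=(-1)^dL_P(c-r/2)=(-1)^dF(c).
\]
Hence $F$ has parity $(-1)^d$.

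\textbf{Forced zeros and the factorization.} For $1\leq t<r$ the interior count is zero by hypothesis, so $L_P(-t)=0$. Thus $F$ vanishes at $c=r/2-t$ for $t=1,\dots,r-1$. These $r-1$ points form the set $\{1-r/2,\dots,r/2-1\}$, which is exactly the root set of $B_r$. Since these roots are simple, $G=F/B_r$ lies in $\Q[c]$. The roots of $B_r$ are symmetric about $0$, so $B_r(-c)=(-1)^{r-1}B_r(c)$, and therefore $G$ has parity $(-1)^{d-r+1}$. If $\eps=1$, then $G$ is odd, so $G(0)=0$ and $c$ divides $G$. In either case $p=G/c^\eps$ is even.

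\textbf{Degree and uniqueness.} The polynomial $p$ is nonzero because $\deg L_P=d$ by Theorem~\ref{thm:ehrhart}. This also gives $\deg p=d-(r-1)-\eps=\ell$. Since $\ell\ge0$ is a degree, and its parity matches because $\ell\equiv d-r+1-\eps\equiv0$, the formula is consistent. Uniqueness holds because $\Q[c]$ is a domain.

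\textbf{Denominators.} I would use the $h^*$-expansion
\[
L_P(t)=\sum_{k=0}^d h^*_k\binom{t+d-k}{d},\qquad h^*_k\in\Z,
\]
which is standard in \cite{BeckRobins}. Evaluating at $t=c-r/2$ writes $d!\,F(c)$ as an integer combination of products $\prod_{i}(c-r/2+m_i)$ of $d$ linear factors with $m_i\in\Z$. The argument now splits by the parity of $r$.

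\emph{Case $r$ even.} Here $d!F\in\Z[c]$. The divisor $c^\eps(r-1)!B_r(c)=c^\eps\prod_j(c-j)$ is monic with integer roots. Division by a monic polynomial preserves $\Z[c]$, so
\[
d!\,F\big/\bigl(c^\eps(r-1)!B_r\bigr)\in\Z[c],
\]
that is, $\bigl(d!/(r-1)!\bigr)\,p\in\Z[c]$. Hence $\den(p)$ divides $d!/(r-1)!$.

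\emph{Case $r$ odd.} The roots of $B_r$ are half-integers, and each shift $c-r/2+m_i$ now has a half-integer constant. I would substitute $y=2c$. Each linear factor becomes $(y-\text{odd})/2$, so $2^d d!\,F\in\Z[y]$. Likewise $2^{r-1+\eps}c^\eps(r-1)!B_r$ is monic in $\Z[y]$. Dividing as before gives
\[
\frac{d!}{(r-1)!}\,2^{\,d-r+1-\eps}\,p=\frac{d!}{(r-1)!}\,2^{\ell}\,p\in\Z[y],
\]
a polynomial of degree $\ell$ in $y$. Rewriting $y^k=2^kc^k$ only introduces nonnegative powers of $2$. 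Therefore $2^\ell d!\,p/(r-1)!\in\Z[c]$, which is the bound \eqref{eq:general-den}.

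The final prime statement then follows immediately, since $d!/(r-1)!$ has only prime factors at most $d$ and the extra factor is a power of $2$. The main obstacle I expect is this odd-$r$ bookkeeping: checking that the monic rescaling in $y$ costs exactly $2^{r-1+\eps}$, so that the net power of $2$ is $2^{\ell}$ and not $2^{d}$.
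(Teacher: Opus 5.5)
Your proposal is correct and follows the paper's proof essentially step by step: the reflection identity from reciprocity, the forced zeros giving $B_r\mid F$, the parity of $G=F/B_r$, and monic division for the denominator bound, after the substitution $c=y/2$ when $r$ is odd. The only variation is that you obtain $d!\,L_P\in\Z[t]$ from the integral $h^*$-expansion rather than from the paper's Newton finite-difference lemma, and either source gives exactly the integrality the monic-division step needs.
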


\subsection{The root string and the parity}
We first prove \eqref{eq:general-factor}. By reciprocity and
\eqref{eq:translation-general}, for every integer $t\geq r$,
\[
L_P(-t)=(-1)^dL_P(t-r).
\]
Both sides are polynomials in $t$. Their difference has infinitely many
roots, and therefore is the zero polynomial. Thus
\begin{equation}\label{eq:general-reflection}
L_P(-t)=(-1)^dL_P(t-r)\qquad\text{in }\Q[t].
\end{equation}
The empty-interior hypothesis and \eqref{eq:EM} also give
\begin{equation}\label{eq:negative-roots}
L_P(-j)=0\qquad(1\leq j\leq r-1).
\end{equation}
The values of $c$ corresponding to these roots are $c=r/2-j$. They are
distinct, and
\begin{equation}\label{eq:B-product}
B_r(c)=\frac{1}{(r-1)!}\prod_{j=1}^{r-1}(c-r/2+j).
\end{equation}
Consequently the factor theorem, applied successively at these roots,
implies $B_r\mid F$ in $\Q[c]$. If $r=1$, this statement says simply
that $B_1=1$ divides $F$; the products and root lists are empty.

Putting $t=c+r/2$ in \eqref{eq:general-reflection} yields
\begin{equation}\label{eq:F-parity}
F(-c)=(-1)^dF(c).
\end{equation}
Replacing $j$ by $r-j$ in \eqref{eq:B-product} gives
\begin{equation}\label{eq:B-parity}
B_r(-c)=(-1)^{r-1}B_r(c).
\end{equation}
Set $G=F/B_r$. Substitution in \eqref{eq:F-parity} and
cancellation of $B_r(c)$ using \eqref{eq:B-parity} give
\begin{equation}\label{eq:R-parity}
G(-c)=(-1)^{d-r+1}G(c)
=(-1)^{\eps}G(c).
\end{equation}
If $\eps=0$, take $p=G$. If $\eps=1$, evaluation at $c=0$
gives $G(0)=0$, so $G=cp$ for a polynomial $p$.
Equation~\eqref{eq:R-parity} then becomes
$(-c)p(-c)=-cp(c)$, whence cancellation of $c$ gives $p(-c)=p(c)$.

The identity $F(r/2)=L_P(0)=1$ shows that $F$ is nonzero, so its
factorization determines a unique nonzero $p$. Since $\deg F=d$ and
$\deg B_r=r-1$, comparison of degrees in \eqref{eq:general-factor} yields
\[
\deg p=d-(r-1)-\eps=\ell\geq0.
\]
In particular, $r-1\leq d$.

\subsection{Integer-valued polynomials and finite differences}
We use the following finite-difference formula for integer-valued
polynomials.

\begin{lemma}[Newton expansion]\label{lem:newton}
Let $f\in\Q[t]$ have degree at most $d$ and satisfy $f(n)\in\Z$ for all
$n\in\NN$. Define $\Delta f(t)=f(t+1)-f(t)$. Then
\begin{equation}\label{eq:newton}
f(t)=\sum_{k=0}^d b_k\binom tk,\qquad
b_k=\Delta^k f(0)=\sum_{j=0}^k(-1)^{k-j}\binom kj f(j)\in\Z.
\end{equation}
Consequently $d!f(t)\in\Z[t]$.
\end{lemma}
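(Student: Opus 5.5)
We prove Lemma~\ref{lem:newton} in two parts. First we show that the Newton expansion holds with the stated coefficients. Then we show that those coefficients are integers and clear denominators.

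\textbf{The expansion.} The polynomials $\binom tk$ for $0\le k\le d$ have degrees exactly $0,1,\ldots,d$, so they form a basis of the space of rational polynomials of degree at most $d$. Hence $f=\sum_{k=0}^d b_k\binom tk$ for unique rationals $b_k$.

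The key identity is
\[
\Delta\binom tk=\binom t{k-1},
\]
which is Pascal's rule read as an identity in $\Q[t]$. To check it, clear the common factor $t(t-1)\cdots(t-k+2)/k!$ and compare what remains. Iterating gives $\Delta^j\binom tk=\binom t{k-j}$, with the convention that this is zero for $j>k$.

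Evaluate at $t=0$. Since $\binom 0m=[m=0]$, we get $\Delta^j f(0)=b_j$.

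For the explicit formula, write $\Delta=S-I$, where $S$ is the shift $Sf(t)=f(t+1)$. The operators $S$ and $I$ commute, so the binomial theorem applies:
\[
\Delta^k=\sum_{j=0}^k(-1)^{k-j}\binom kj S^j.
\]
Applying this to $f$ and evaluating at $0$ gives the stated sum.

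\textbf{Integrality.} Each $f(j)$ with $0\le j\le k$ is an integer by hypothesis. So $b_k$ is an integer combination of integers, and $b_k\in\Z$.

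\textbf{Clearing denominators.} For $k\le d$,
\[
d!\binom tk=\frac{d!}{k!}\,t(t-1)\cdots(t-k+1).
\]
Here $d!/k!$ is an integer, and the falling factorial has integer coefficients. Therefore $d!\binom tk\in\Z[t]$, and so $d!f=\sum_k b_k\,d!\binom tk\in\Z[t]$.

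There is no real obstacle. The only point needing care is the operator identity $\Delta\binom tk=\binom t{k-1}$ in $\Q[t]$, including the case $k=0$, where $\Delta\binom t0=0$.
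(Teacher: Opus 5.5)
Your proof is correct and follows the paper's argument essentially step for step: the binomial basis $\binom tk$, Pascal's identity $\Delta\binom tk=\binom t{k-1}$ iterated and evaluated at $t=0$, integrality of the alternating sum, and $d!\binom tk\in\Z[t]$. The only cosmetic difference is that you get the alternating-sum formula from the binomial theorem for the commuting operators $S$ and $I$ with $\Delta=S-I$, whereas the paper proves the same identity by direct induction on $k$.
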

\begin{proof}
The polynomials $\binom tk$, $0\leq k\leq d$, have distinct degrees
$k$ and nonzero leading coefficients $1/k!$. Successive subtraction of
leading terms shows that they form a basis for the polynomials of degree
at most $d$. Write $f=\sum_{k=0}^d b_k\binom tk$ in this basis.
Pascal's identity, interpreted as a polynomial identity, gives
\[
\Delta\binom tk=\binom t{k-1}\quad(k\geq1),\qquad
\Delta\binom t0=0.
\]
After $j$ differences, the summands with $k<j$ vanish, the summand with
$k=j$ becomes $b_j$, and the terms with $k>j$ vanish at $t=0$, because
$\binom0{k-j}=0$. Thus $b_j=\Delta^j f(0)$.
The identity
\[
\Delta^k f(t)=\sum_{j=0}^k(-1)^{k-j}\binom kj f(t+j)
\]
follows by induction on $k$. For the induction step, subtract its value
at $t$ from its value at $t+1$. The coefficient of $f(t+j)$ for
$1\leq j\leq k$ becomes
$(-1)^{k+1-j}\bigl(\binom k{j-1}+\binom kj\bigr)
=(-1)^{k+1-j}\binom{k+1}j$; the two endpoint terms have the same
form. Setting $t=0$ proves the second formula in \eqref{eq:newton}.
Each summand is integral.
Finally,
\[
d!\binom tk=\frac{d!}{k!}\prod_{j=0}^{k-1}(t-j)\in\Z[t]
\]
for every $k\leq d$. Multiply \eqref{eq:newton} by $d!$ and sum to obtain
the last assertion.
\end{proof}

\begin{lemma}[Exact division by a monic polynomial]\label{lem:monic}
Suppose $g\in\Z[x]$ is monic and $f\in\Z[x]$. If $f=gq$ for some
$q\in\Q[x]$, then $q\in\Z[x]$.
\end{lemma}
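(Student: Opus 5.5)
We prove the statement by running the usual long division of $f$ by $g$ and checking that it never leaves $\Z[x]$. Then we use uniqueness of the quotient in $\Q[x]$. The point is that $g$ is monic, so dividing by its leading coefficient never introduces a denominator. If $g=1$ or $f=0$ the claim is immediate, so assume $\deg g=m\geq0$ and proceed by strong induction on $\deg f$.

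The key step is the one-step reduction. Let $f\neq0$ have degree $n\geq m$ and leading coefficient $f_n\in\Z$. Put
\[
f_1=f-f_nx^{n-m}g .
\]
Since $g$ is monic with integer coefficients, $f_1\in\Z[x]$, and either $f_1=0$ or $\deg f_1<n$. Iterating this, the ordinary division algorithm carried out entirely in $\Z[x]$ yields polynomials $q_0,r_0\in\Z[x]$ with
\[
f=gq_0+r_0,\qquad r_0=0\ \text{or}\ \deg r_0<m .
\]
Here $q_0$ is the sum of the monomials $f_nx^{n-m}$ produced along the way.

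Next we compare with the given factorization $f=gq$ with $q\in\Q[x]$. Subtracting gives
\[
g(q-q_0)=r_0 .
\]
If $q\neq q_0$, the left side has degree at least $\deg g=m$, because $g$ is monic and $\Q[x]$ has no zero divisors. The right side has degree less than $m$ or is zero, which is a contradiction. Hence $q=q_0\in\Z[x]$.

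None of the steps is a genuine obstacle. The only point needing care is the observation that the leading coefficient of $g$ is a unit in $\Z$, which keeps every quotient monomial integral. The case $\deg f<m$ is also covered: there $q_0=0$ and $r_0=f$, and the degree comparison forces $q=0$, which is consistent only with $f=0$.
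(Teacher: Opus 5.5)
Your proposal is correct and matches the paper's proof step for step. The paper likewise runs long division by the monic $g$ inside $\Z[x]$ to obtain $f=gq_0+r_0$ with $q_0,r_0\in\Z[x]$. It then uses the degree comparison in $g(q-q_0)=r_0$ to conclude $q=q_0$.
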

\begin{proof}
Divide $f$ by $g$. If the current dividend has
leading term $b x^n$ and $\deg g=m\leq n$, subtract
$b x^{n-m}g$. The coefficient $b$ is integral, and the new dividend is
again an integer polynomial of smaller degree. Repeating finitely often
gives $f=gq_0+r_0$, where $q_0,r_0\in\Z[x]$ and either $r_0=0$ or
$\deg r_0<m$. Subtracting this identity from $f=gq$ gives
$g(q-q_0)=r_0$. A nonzero left side has degree at least $m$, whereas a
nonzero right side has degree below $m$. Hence both vanish and $q=q_0$.
\end{proof}

\subsection{Proof of the denominator bound}
Apply Lemma~\ref{lem:newton} to $f=L_P$, and write
$U(t)=d!L_P(t)\in\Z[t]$. If $r$ is even, $r/2\in\Z$, so
\[
d!F(c)=U(c-r/2)\in\Z[c].
\]
The polynomial
\[
D(c)=(r-1)!c^\eps B_r(c)
=c^\eps\prod_{j=1}^{r-1}(c-r/2+j)
\]
is monic in $\Z[c]$. By \eqref{eq:general-factor},
\[
d!F(c)=D(c)\left(\frac{d!}{(r-1)!}p(c)\right).
\]
Lemma~\ref{lem:monic} shows that the polynomial in parentheses has
integer coefficients. This proves the first case of
\eqref{eq:general-den}.

If $r$ is odd, then $r/2$ is half-integral. Set $c=x/2$ and write
$U(t)=\sum_{k=0}^d u_k t^k$, with $u_k\in\Z$. Then
\begin{equation}\label{eq:half-shift-integral}
2^d d!F(x/2)=2^d U((x-r)/2)
=\sum_{k=0}^d u_k2^{d-k}(x-r)^k\in\Z[x].
\end{equation}
Set $m=r-1+\eps$, so $\ell=d-m$, and define the monic integer polynomial
\[
D^{\ast}(x)=x^\eps\prod_{j=1}^{r-1}(x-r+2j).
\]
Direct substitution in \eqref{eq:general-factor} gives
\begin{align}
F(x/2)&=\frac{D^{\ast}(x)}{2^m(r-1)!}\,p(x/2),\notag\\
2^d d!F(x/2)&=D^{\ast}(x)
\left(\frac{2^\ell d!}{(r-1)!}\,p(x/2)\right).
\label{eq:half-shift-division}
\end{align}
By \eqref{eq:half-shift-integral} and Lemma~\ref{lem:monic}, the
polynomial in parentheses belongs to $\Z[x]$. Substitute $x=2c$ to
deduce
\[
\frac{2^\ell d!}{(r-1)!}\,p(c)\in\Z[c].
\]
Because $r-1\leq d$, the displayed multiplier is a positive integer.
If a reduced coefficient is $b/q$ and multiplying it by an integer $K$
gives an integer, then $q\mid K$ since $\gcd(b,q)=1$. Taking the least
common multiple over the coefficients proves the claimed divisibility
in \eqref{eq:general-den}. A factor of $d!$ has no prime greater than
$d$, and the additional factor is a power of $2$. This completes the
proof of Theorem~\ref{thm:general}.

\section{The quasi-symmetric polytope}\label{sec:qs-model}

Let $V_a\subset\R^{a\times a}$ be the linear subspace defined by
$u_{ij}=u_{ji}$ for $i+j\ne a+1$, and set
\begin{equation}\label{eq:qs-polytope}
\begin{split}
\Lambda_a&=V_a\cap\Z^{a\times a},\\
P_a&=\{u\in V_a:0\leq u_{ij}\leq1,\
u_{ij}\geq u_{i+1,j},\ u_{ij}\geq u_{i,j+1}\}.
\end{split}
\end{equation}
Every displayed adjacent-index inequality is imposed only when its two
cells belong to $[a]^2$.

\begin{lemma}\label{lem:qs-dimension}
The space $V_a$ has dimension $N_a$, the group $\Lambda_a$ is a full
lattice in $V_a$, and $P_a$ has dimension $N_a$.
\end{lemma}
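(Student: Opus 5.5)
We prove the three claims in order: a count of free coordinates, a choice of lattice basis, and an interior point for Lemma~\ref{lem:strict}.

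\textbf{Dimension of $V_a$.} The relations $u_{ij}=u_{ji}$ for $i+j\ne a+1$ identify the cells of $[a]^2$ in $s$-orbits, except that orbits on the anti-diagonal are not identified. The free coordinates are therefore as follows.
\begin{itemize}
\item One coordinate for each cell with $i\le j$ off the anti-diagonal.
\item Every anti-diagonal cell, each counted separately.
\end{itemize}
The cells with $i\le j$ number $a(a+1)/2$. Among them lie the anti-diagonal cells with $i\le j$, namely $\lceil a/2\rceil$ of them. The anti-diagonal itself has $a$ cells. Hence
\[
\dim V_a=\frac{a(a+1)}2-\left\lceil\frac a2\right\rceil+a=\frac{a(a+1)}2+\left\lfloor\frac a2\right\rfloor=N_a .
\]
To make this rigorous, we exhibit the basis of $V_a$ given by the matrices $e_{ij}+e_{ji}$ (for $i<j$, $i+j\ne a+1$), $e_{ii}$ (for $2i\ne a+1$), and $e_{ij}$ (for $i+j=a+1$). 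These matrices have disjoint supports, and any $u\in V_a$ is the sum of its entries times them.

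\textbf{Lattice.} The same basis consists of $0/1$ matrices with disjoint supports. An element of $V_a$ is integral exactly when its coordinates in this basis are integers. So $\Lambda_a$ is the set of integer combinations of a real basis of $V_a$, and is therefore a full lattice.

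\textbf{Dimension of $P_a$.} We apply Lemma~\ref{lem:strict} to the defining inequalities of $P_a$ inside $V_a$. This needs a point $u^*\in V_a$ satisfying all of them strictly. We take
\[
u^*_{ij}=\frac{\delta_{ij}}{2a+1}=\frac{2a+1-i-j}{2a+1}.
\]
This point is symmetric, so it lies in $V_a$. Its entries lie in $[1/(2a+1),(2a-1)/(2a+1)]\subset(0,1)$. It strictly decreases by $1/(2a+1)$ along each step in $i$ or in $j$. Hence every inequality holds strictly, and Lemma~\ref{lem:strict} gives $\aff(P_a)=V_a$, so $\dim P_a=N_a$.

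The only point needing care is the orbit count on the anti-diagonal. For odd $a$ the central cell is both diagonal and anti-diagonal, and it is counted exactly once. The basis description above handles this uniformly, because the condition $2i\ne a+1$ excludes that cell from the diagonal list while the anti-diagonal list includes it.
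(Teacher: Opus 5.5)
Your proof is correct and follows essentially the same route as the paper: you count the free coordinates of $V_a$ from transpose orbits with the anti-diagonal pairs freed, identify $\Lambda_a$ through representative coordinates, and apply Lemma~\ref{lem:strict} to a scaled staircase witness. The differences are cosmetic: the paper adds $\lfloor a/2\rfloor$ to the symmetric count $a(a+1)/2$ while you subtract $\lceil a/2\rceil$ and add back $a$, you write out an explicit disjoint-support basis, and you normalize the witness by $2a+1$ instead of the paper's $2a$.
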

\begin{proof}
A fully symmetric $a\times a$ matrix has $a$ diagonal coordinates and
$\binom a2$ off-diagonal coordinates, hence $a(a+1)/2$ independent
entries. On the anti-diagonal the nonfixed transpose pairs number
$\lfloor a/2\rfloor$. Each such pair is disjoint from the other pairs;
removing its one equality replaces one variable by two and increases the
dimension by one. This gives $N_a$.
Choose one representative of every pair whose equality is retained,
and keep both coordinates of each freed pair. The selected entries
determine the full matrix by copying entries. Arbitrary integer values of
these entries give an integer matrix, and every integer matrix in $V_a$
arises this way. Thus this coordinate map identifies $\Lambda_a$ with
$\Z^{N_a}$.

For the dimension of $P_a$, define
\begin{equation}\label{eq:qs-witness}
u^*_{ij}=\frac{2a+1-i-j}{2a}.
\end{equation}
This matrix is symmetric. Its smallest entry is $1/(2a)$, its largest
is $(2a-1)/(2a)$, and each adjacent difference is $1/(2a)$. Thus every
inequality in \eqref{eq:qs-polytope} is strict at $u^*$. For example,
perturbing each coordinate within $V_a$ by less than $1/(4a)$ in absolute
value keeps all adjacent differences positive and all bounds strict.
Lemma~\ref{lem:strict} therefore proves $\aff(P_a)=V_a$.
\end{proof}

\begin{lemma}\label{lem:qs-integrality}
Every vertex of $P_a$ belongs to $\Lambda_a$.
\end{lemma}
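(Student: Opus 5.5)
We show directly that every point of $P_a$ with some non-integral entry is a proper convex combination of two distinct points of $P_a$, so it cannot be a vertex. This is the standard order-polytope argument. The only new feature is the identification pattern defining $V_a$, and the plan is to check that this pattern survives the perturbation.

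Fix $u\in P_a$ with at least one entry not in $\Z$; since $0\le u_{ij}\le 1$, this means some entry lies in $(0,1)$. Let $\theta$ be such a value, and let $C=\{(i,j):u_{ij}=\theta\}$. Define the indicator matrix $\chi$ of $C$, and for small $\epsilon$ put $u^{\pm}=u\pm\epsilon\chi$.

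\textbf{Step 1: $u^{\pm}\in V_a$.} The defining equalities of $V_a$ have the form $u_{ij}=u_{ji}$ (for $i+j\ne a+1$). If one cell of such a pair lies in $C$, so does the other, because the two entries are equal. Hence $\chi$ satisfies the same equalities, and $\chi\in V_a$. This is the step where the quasi-symmetry enters, and it is immediate because $C$ is a level set of $u$.

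\textbf{Step 2: $u^{\pm}\in P_a$ for small $\epsilon$.} Choose $\epsilon$ smaller than the distance from $\theta$ to every entry value of $u$ different from $\theta$, and also smaller than $\min(\theta,1-\theta)$. The bounds $0\le u^{\pm}_{ij}\le1$ then hold. For an adjacent inequality $u_{ij}\ge u_{i',j'}$ there are three cases.
\begin{itemize}
\item If both cells are in $C$ or both are outside $C$, the difference is unchanged.
\item If exactly one cell is in $C$, then the two entries differ. Since $u_{ij}\ge u_{i',j'}$, they differ by more than $\epsilon$, so a shift of size $\epsilon$ preserves the strict inequality.
\end{itemize}

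\textbf{Step 3: conclusion.} We have $u=\tfrac12u^++\tfrac12u^-$ with $u^+\ne u^-$, so $u$ is not a vertex. Hence every vertex has all entries in $\{0,1\}$ and lies in $V_a$, that is, it belongs to $\Lambda_a=V_a\cap\Z^{a\times a}$.

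There is no serious obstacle here. The only point needing care is Step 1, namely that the perturbation direction lies in $V_a$. Using full level sets, rather than a single cell, handles this automatically.
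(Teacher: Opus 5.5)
Your proof is correct, and it takes a different route from the paper's.

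\textbf{The paper's route.} The paper argues globally. For $0<t\le 1$ the threshold array $E_t(u)$ (entry $1$ where $u_{ij}\ge t$, else $0$) again lies in $P_a$, because thresholding is monotone and equal entries get equal thresholds. If $t_1<\cdots<t_k$ are the distinct positive entries of $u$, then $u=\sum_{j}(t_j-t_{j-1})E_{t_j}(u)+(1-t_k)\cdot 0$ is an explicit convex combination. Hence $P_a$ is the convex hull of its finitely many feasible $0/1$ matrices, and an extreme point must be one of the points used.

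\textbf{Your route.} You argue locally: a point with a fractional value $\theta$ is the midpoint of $u\pm\epsilon\chi_C$. Taking the whole level set $C$ plays the role of the paper's observation that equal entries have equal thresholds, and it makes compatibility with $V_a$ automatic.

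\textbf{What each buys.} Your version is shorter and proves exactly the stated claim about extreme points. The paper's decomposition gives slightly more, and that extra is used later. It shows directly that $P_a=\operatorname{conv}(P_a\cap\{0,1\}^{a\times a})$, so $P_a$ is a polytope in the paper's sense (a convex hull of finitely many points) with lattice vertices, as Theorem~\ref{thm:ehrhart} requires. From your argument you reach that conclusion only after invoking Minkowski's theorem: a compact convex set in finite dimension is the convex hull of its extreme points. Finiteness then follows since all extreme points are $0/1$ matrices.

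\textbf{Signed models.} Both methods carry over to Lemma~\ref{lem:signed-integrality}. The paper uses the odd threshold $T_t$. For your perturbation you would take $\chi=\mathbf 1_{\{z=\theta\}}-\mathbf 1_{\{z=-\theta\}}$ with $\theta>0$, so that $z_{\tau(p)}=-z_p$ is preserved. You would also need to impose $\epsilon<\theta$, because a $\theta$-cell adjacent to a $(-\theta)$-cell now moves in the opposite direction.
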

\begin{proof}
For $u\in P_a$ and $0<t\leq1$, define the threshold array
\[
E_t(u)_{ij}=\begin{cases}1,&u_{ij}\geq t,\\0,&u_{ij}<t.\end{cases}
\]
If $u_{ij}\geq u_{i+1,j}$, then $u_{i+1,j}\geq t$ implies
$u_{ij}\geq t$; hence $E_t(u)_{ij}\geq E_t(u)_{i+1,j}$. The identical
implication for the column inequality gives
$E_t(u)_{ij}\geq E_t(u)_{i,j+1}$. Equal entries have equal thresholds,
so all defining equalities of $V_a$ are preserved. Thus
$E_t(u)\in P_a\cap\{0,1\}^{a\times a}$.

For $u\ne0$, list its distinct positive entries as
$0=t_0<t_1<\cdots<t_k\leq1$. Set $E^{(j)}:=E_{t_j}(u)$
for $1\leq j\leq k$. Then
\begin{equation}\label{eq:threshold-ordinary}
u=\sum_{j=1}^k(t_j-t_{j-1})E^{(j)}+(1-t_k)0.
\end{equation}
Indeed, a coordinate equal to $t_b$ appears in exactly the first $b$
threshold arrays; the coefficient sum there is $t_b$. A zero coordinate
remains zero. The weights are nonnegative and sum to one. The zero
matrix has the trivial decomposition. Consequently $P_a$ is the convex
hull of its finite set of feasible $0/1$ matrices.

Combine repeated points in such a convex combination.
If an extreme point were not one of the
points used, its combination would involve at least two distinct points
with positive weights. Separating one term from the rest would express
it as a nontrivial convex combination of two different points of $P_a$,
contradicting the definition of an extreme point. Thus all vertices
are feasible $0/1$ matrices and are integral.
\end{proof}

The definitions now give, including $H=0$,
\begin{equation}\label{eq:qs-count}
A_a(H)=\#(HP_a\cap\Lambda_a).
\end{equation}
Ehrhart polynomiality therefore applies with dimension $N_a$.

\subsection{Relation with ordinary order polytopes}
We identify $P_a$ with an order polytope in the sense of
Stanley \cite[Definition 1.1]{Stanley}.
A partially ordered set is a set with a reflexive, antisymmetric,
transitive relation. Start with the product order on $[a]^2$, and
identify $(i,j)$ with $(j,i)$ only when $i+j\ne a+1$.
Define directed edges between the resulting classes using the adjacent
horizontal and vertical relations, and take reachability as the order.
The integer $i+j-2$ is unchanged by every identification and increases
by exactly one on every edge. Hence no directed cycle is possible, and
reachability is antisymmetric. This constructs a poset with $N_a$ elements.

In the representative coordinates, $P_a$ consists of order-reversing
maps from this poset into $[0,1]$. Replacing every value $u$ by $1-u$
identifies this convention with the usual order-preserving order
polytope by an affine lattice isomorphism. In our order-reversing
convention, the vertices are the characteristic functions of order
ideals, as follows from \cite[Corollary 1.3]{Stanley}. The identification
also gives the counting interpretation in \eqref{eq:qs-count};
compare \cite[Theorem 4.1]{Stanley}.

\section{The staircase and complement argument for quasi-symmetry}
\label{sec:qs-interior}

Recall the integer staircase
\begin{equation}\label{eq:delta}
\delta_{ij}=2a+1-i-j.
\end{equation}
Its adjacent differences equal one, and
\begin{equation}\label{eq:delta-identities}
\delta_{ij}=\delta_{ji},\qquad
\delta_{ij}+\delta_{\rho(i,j)}=2a,\qquad
\delta_{ij}+\delta_{\tau(i,j)}=2a.
\end{equation}
For instance, $\delta_{\rho(i,j)}=i+j-1$, which proves the second
identity; the third follows by the symmetry of $\delta$.

\begin{lemma}\label{lem:qs-interior}
For positive integers $H<2a$, $HP_a$ has no relative interior lattice
point. For every integer $H\geq2a$,
\begin{equation}\label{eq:qs-interior-translation}
\relint(HP_a)\cap\Lambda_a
=\delta+\bigl((H-2a)P_a\cap\Lambda_a\bigr).
\end{equation}
\end{lemma}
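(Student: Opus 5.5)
First I describe $\relint(HP_a)$ explicitly, using Lemma~\ref{lem:strict}. The witness $u^*$ from \eqref{eq:qs-witness} lies in $P_a$ and satisfies every defining inequality strictly. Scaling $u^*$ by $H$ gives the same property for $HP_a$. Hence
\[
\relint(HP_a)=\{u\in V_a:0<u_{ij}<H,\ u_{ij}>u_{i+1,j},\ u_{ij}>u_{i,j+1}\}.
\]
The same index conventions apply: adjacent inequalities are imposed only inside $[a]^2$. For a lattice point $u\in\Lambda_a$, the strict inequalities become $u_{ij}\geq1$, $u_{ij}\leq H-1$, and adjacent differences at least $1$.

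For the emptiness claim, take such a lattice point $u$. Follow the chain of cells
\[
(a,a),(a-1,a),\ldots,(1,a),(1,a-1),\ldots,(1,1).
\]
It has $2a-1$ cells, and each step strictly increases $u$ by at least one. Since $u_{aa}\geq1$, this gives $u_{11}\geq 2a-1$. Combined with $u_{11}\leq H-1$, it forces $H\geq2a$. So for $1\leq H<2a$ there is no interior lattice point.

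For $H\geq2a$, I prove the translation identity by two inclusions. Given an interior lattice point $u$, set $\pi'=u-\delta$. The matrix $\delta$ is symmetric by \eqref{eq:delta-identities}, so $\delta\in\Lambda_a$ and hence $\pi'\in\Lambda_a$. Since $\delta$ has adjacent differences exactly $1$, the condition $u_{ij}-u_{i+1,j}\geq1$ becomes $\pi'_{ij}\geq\pi'_{i+1,j}$, and the column condition transforms the same way.

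For the bounds, the same chain argument run from a general cell gives $u_{ij}\geq\delta_{ij}$. Starting at $(i,j)$, move to $(a,j)$ and then to $(a,a)$; this path has $2a-i-j$ steps. Together with $u_{aa}\geq1$ it gives $u_{ij}\geq 2a+1-i-j$.

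For the upper bound, I use the complement, which is the symmetry advertised in the section title. The map $u\mapsto \bar u$, with $\bar u_{ij}=H-u_{\rho(i,j)}$, preserves $V_a$ because $\rho$ commutes with $s$ and preserves the anti-diagonal. It also preserves the interior. Applying the lower bound to $\bar u$ gives
\[
H-u_{\rho(i,j)}\geq\delta_{ij}.
\]
By \eqref{eq:delta-identities}, this is equivalent to $u_{ij}\leq H-2a+\delta_{ij}$. Hence $0\leq\pi'_{ij}\leq H-2a$, and $\pi'\in(H-2a)P_a\cap\Lambda_a$.

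Conversely, take $\pi'\in(H-2a)P_a\cap\Lambda_a$ and set $u=\pi'+\delta$. Then $u\in\Lambda_a$. Its adjacent differences are at least $1$, and $u_{ij}\geq\delta_{ij}\geq1$. Finally, $u_{ij}\leq H-2a+\delta_{ij}\leq H-1$, because $\delta_{ij}\leq 2a-1$. So $u$ is an interior lattice point.

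I expect the main care to lie in the upper-bound step. One must verify that complementation maps $V_a$ into itself, specifically that the identification rule $u_{ij}=u_{ji}$ off the anti-diagonal is $\rho$-invariant. One must also verify that it maps $\relint(HP_a)$ into itself. Both follow from $\rho s=s\rho$ and from $\rho$ fixing the set $\{i+j=a+1\}$.
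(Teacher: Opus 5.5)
Your proof is correct and is essentially the paper's argument: strict lattice inequalities from Lemma~\ref{lem:strict}, the path bound $u_{ij}\geq\delta_{ij}$, complementation by $\rho$ for the upper bound $u_{ij}\leq H-2a+\delta_{ij}$, and translation by $\delta$ in both directions. The differences are cosmetic: you get emptiness for $H<2a$ directly from $2a-1\leq u_{11}\leq H-1$ rather than from the two-sided bounds, and the fact that complementation preserves the strict order inequalities actually comes from $\rho$ reversing adjacent pairs of cells together with $x\mapsto H-x$ reversing order (whereas $\rho s=s\rho$ and $\rho$ fixing the anti-diagonal only give invariance of $V_a$).
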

\begin{proof}
By Lemma~\ref{lem:strict} and \eqref{eq:qs-witness}, a lattice matrix
$\pi$ is in $\relint(HP_a)$ precisely when it satisfies the equalities
of $V_a$ and
\begin{equation}\label{eq:strict-array}
1\leq\pi_{ij}\leq H-1,\qquad
\pi_{ij}-\pi_{i+1,j}\geq1,\quad
\pi_{ij}-\pi_{i,j+1}\geq1.
\end{equation}
A path of south and east unit steps from $(i,j)$ to $(a,a)$ has
$(a-i)+(a-j)$ steps. Summing its inequalities and using
$\pi_{aa}\geq1$ gives
\begin{equation}\label{eq:lower-delta}
\pi_{ij}\geq1+(a-i)+(a-j)=\delta_{ij}.
\end{equation}

For the upper bound, consider the complementary array
\begin{equation}\label{eq:complement}
\bar\pi_{ij}=H-\pi_{\rho(i,j)}.
\end{equation}
This sends integer entries strictly between $0$ and $H$ to entries in
the same range. Moreover
\begin{align*}
\bar\pi_{ij}-\bar\pi_{i+1,j}
&=\pi_{a-i,a+1-j}-\pi_{a+1-i,a+1-j}\geq1,\\
\bar\pi_{ij}-\bar\pi_{i,j+1}
&=\pi_{a+1-i,a-j}-\pi_{a+1-i,a+1-j}\geq1.
\end{align*}
If $i+j\ne a+1$, the reflected cell also lies off the anti-diagonal.
The defining equality for $\pi$ thus implies
$\bar\pi_{ij}=\bar\pi_{ji}$. Since
$H-\bar\pi_{\rho(i,j)}=\pi_{ij}$, complementation is an involution on
the relative interior lattice points.
Apply \eqref{eq:lower-delta} to $\bar\pi$ at the cell
$\rho(i,j)$:
\[
H-\pi_{ij}\geq\delta_{\rho(i,j)}=2a-\delta_{ij}.
\]
Hence
\begin{equation}\label{eq:both-bounds}
\delta_{ij}\leq\pi_{ij}\leq H-2a+\delta_{ij}.
\end{equation}
For $H<2a$ these bounds contradict each other, proving emptiness.

If $H\geq2a$, set $\pi'=\pi-\delta$. Equation~\eqref{eq:both-bounds}
gives $0\leq\pi'_{ij}\leq H-2a$. For either existing adjacent cell
$q$ immediately below or to the right of $p$, one has
\[
\pi'_p-\pi'_q=(\pi_p-\pi_q)-(\delta_p-\delta_q)
=(\pi_p-\pi_q)-1\geq0.
\]
The equality $\delta_{ij}=\delta_{ji}$ preserves the required transpose
equalities. Thus $\pi'\in(H-2a)P_a\cap\Lambda_a$.
Conversely, if $\pi'$ is in this latter set, then
$\pi=\pi'+\delta$ has adjacent differences at least one. Since
$1\leq\delta_{ij}\leq2a-1$, its entries lie between $1$ and $H-1$.
It belongs to $V_a$, so it is an interior lattice point. Addition and
subtraction of $\delta$ are inverse operations, proving the set equality.
At $H=2a$, the unique interior point is $\delta$, corresponding to
the zero matrix in $0P_a$.
\end{proof}

\begin{proof}[Proof of Theorem~\ref{thm:qs}]
Lemmas~\ref{lem:qs-dimension}, \ref{lem:qs-integrality}, and
\ref{lem:qs-interior} verify the hypotheses of
Theorem~\ref{thm:general} with $d=N_a$, $r=2a$, and $\omega=\delta$.
In particular the reciprocity identity is
\[
A_a(-H)=(-1)^{N_a}A_a(H-2a).
\]
For $a=2b$, $N_a=2b(b+1)$ is even and
$N_a-2a+1$ is odd. Thus $\eps=1$ and
\[
\ell=2b(b+1)-(4b-1)-1=2b(b-1).
\]
For $a=2b+1$, $N_a=2b^2+4b+1$ is odd and $N_a-2a+1$ is even.
Thus $\eps=0$ and
\[
\ell=(2b^2+4b+1)-(4b+1)=2b^2.
\]
These are respectively
$\lfloor(2b-1)^2/2\rfloor$ and $\lfloor(2b)^2/2\rfloor$.
The binomial factor in Theorem~\ref{thm:general} becomes
$\binom{c+a-1}{2a-1}$. Its even-$r$ denominator bound is exactly
\eqref{eq:main-qs-degree-den}. This also includes $a=1$.
\end{proof}

\section{The two polytopes with signed identifications}
\label{sec:signed-models}

For the second-kind families, center every entry at the half-height:
\begin{equation}\label{eq:centering}
z_{ij}=\pi_{ij}-h.
\end{equation}
Then the bounds become $-h\leq z_{ij}\leq h$, the order inequalities are
unchanged, and \eqref{eq:qt} becomes the homogeneous equation
\begin{equation}\label{eq:signed-equalities}
z_{\tau(i,j)}=-z_{ij}\qquad(i\ne j).
\end{equation}
Allowing real entries subject to these relations, define
\begin{align}
W_a&=\{z\in\R^{a\times a}:z_{\tau(i,j)}=-z_{ij}
\text{ whenever }i\ne j\},\notag\\
W_a^{\mathrm s}&=\{z\in W_a:z_{ij}=z_{ji}\text{ for all }i,j\},
\label{eq:signed-spaces}\\
\Gamma_a&=W_a\cap\Z^{a\times a},\qquad
\Gamma_a^{\mathrm s}=W_a^{\mathrm s}\cap\Z^{a\times a}.\notag
\end{align}
The two unit polytopes are
\begin{equation}\label{eq:signed-polytopes}
\begin{split}
R_a&=\{z\in W_a:-1\leq z_{ij}\leq1,\
z_{ij}\geq z_{i+1,j},\ z_{ij}\geq z_{i,j+1}\},\\
R_a^{\mathrm s}&=R_a\cap W_a^{\mathrm s}.
\end{split}
\end{equation}
The term signed refers to the coordinate relations
\eqref{eq:signed-equalities}.

\begin{lemma}[Independent coordinates]\label{lem:signed-coordinates}
The spaces $W_a$ and $W_a^{\mathrm s}$ have dimensions $d_a$ and
$e_a$, respectively, and their coordinate lattices are isomorphic
to $\Z^{d_a}$ and $\Z^{e_a}$.
\end{lemma}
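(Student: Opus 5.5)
We will exhibit explicit independent coordinates, following the pattern of Lemma~\ref{lem:qs-dimension}. The map $\tau$ is an involution of $[a]^2$, and it preserves the set of off-diagonal cells: $\tau(i,j)$ lies on the main diagonal only when $a+1-j=a+1-i$, that is, when $i=j$. Thus the off-diagonal cells split into $\tau$-orbits of size one and size two. The size-one orbits are the off-diagonal anti-diagonal cells $i+j=a+1$, $i\neq j$, and there are $a-(a\bmod 2)$ of them. For such a cell the relation $z_{ij}=-z_{ij}$ forces $z_{ij}=0$. Each size-two orbit $\{p,\tau p\}$ contributes one free coordinate $z_p$, with $z_{\tau p}=-z_p$. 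The diagonal entries carry no relation at all in $W_a$, so they contribute $a$ free coordinates.

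This gives $\dim W_a=a+\tfrac12\bigl(a^2-a-(a-(a\bmod2))\bigr)$. For even $a$ this equals $a+(a^2-2a)/2=a^2/2=d_a$. For odd $a$ it equals $a+(a^2-2a+1)/2=(a^2+1)/2=d_a$.

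For the lattice statement, choose one representative of each size-two orbit together with all diagonal cells. Integer values of the chosen entries determine an integer matrix in $W_a$, by negating entries and placing zeros on the fixed cells. Conversely, every integer matrix in $W_a$ arises in this way. The coordinate map is therefore a linear isomorphism $W_a\to\R^{d_a}$ that carries $\Gamma_a$ onto $\Z^{d_a}$.

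For $W_a^{\mathrm s}$ we add the relation $z_{ij}=z_{ji}$. The group generated by $s$ and $\rho$ (hence containing $\tau=\rho s$) acts on $[a]^2$, and we count free parameters orbit by orbit.

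A diagonal cell $(i,i)$ is fixed by $s$, and $\tau(i,i)=(a+1-i,a+1-i)$ is again diagonal. Since the signed relation is imposed only off the diagonal, the diagonal entries are all free, giving $a$ parameters.

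Now take an off-diagonal cell $p$. Its orbit is $\{p,sp,\tau p,\rho p\}$, and the relations read $z_{sp}=z_p$ and $z_{\tau p}=-z_p$. If $\rho p=p$, the cell would be the center, which is diagonal, so this does not occur for off-diagonal $p$. If $\tau p=p$ or $\tau p=sp$, the relations force $z_p=0$. The case $\tau p=sp$ means $\rho p=p$, which we have excluded. Otherwise the orbit has size four and contributes exactly one free parameter.

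The off-diagonal cells fixed by $\tau$ are the $a-(a\bmod2)$ anti-diagonal cells. The remaining $a^2-a-(a-(a\bmod 2))$ off-diagonal cells lie in orbits of size four. One should check that there is no inconsistency: in a size-four orbit the cell $sp$ is not $\tau$-related to $p$ via any forced-zero chain. This holds because $z_{\tau(sp)}=z_{\rho p}=-z_{sp}=-z_p=z_{\tau p}$, and $\tau p$ and $\rho p$ are $s$-partners, so the relations are consistent.

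This gives $\dim W_a^{\mathrm s}=a+\tfrac14(a^2-2a+(a\bmod2))$. For even $a$ this equals $(a^2+2a)/4=e_a$. For odd $a$ it equals $(a^2+2a+1)/4=e_a$.

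The lattice isomorphism $\Gamma_a^{\mathrm s}\cong\Z^{e_a}$ follows exactly as before. We choose one representative per size-four orbit together with all diagonal cells, and the copy-and-negate reconstruction takes integer choices to integer matrices bijectively.

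The main obstacle is the orbit bookkeeping in the symmetric case. One must verify the consistency of the sign relations on each orbit, since an odd cycle of sign flips would force extra zeros. One must also confirm that the diagonal is excluded from the sign relation even though $\tau$ permutes diagonal cells. We handle both points by the explicit orbit analysis above.
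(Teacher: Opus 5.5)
Your proof is correct and follows essentially the paper's argument. The $W_a$ count by $\tau$-orbits, with forced zeros on the off-diagonal anti-diagonal cells, is identical to the paper's. In the symmetric case, your free orbits of size four under $\langle s,\rho\rangle$ are exactly the paper's two-element complementation orbits on transposition classes $\{i,j\}$, and your $a-(a\bmod 2)$ anti-diagonal cells correspond to its $\lfloor a/2\rfloor$ fixed unordered pairs. The only difference is that you check sign consistency on a free orbit explicitly, which the paper's approach of quotienting by $s$ first makes automatic.
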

\begin{proof}
The main diagonal is invariant under $\tau$, and
\eqref{eq:signed-equalities} is not imposed there. Hence all $a$
diagonal entries are independent real variables.
Among the $a^2-a$ off-diagonal cells, the cells fixed by $\tau$ are
exactly those on the anti-diagonal. There are
\[
f_a=\begin{cases}a,&a\text{ even},\\a-1,&a\text{ odd}\end{cases}
\]
such cells off the main diagonal. At a fixed cell the equality is
$z_{ij}=-z_{ij}$, forcing $z_{ij}=0$. Every remaining off-diagonal
$\tau$-orbit has two cells with opposite entries, and contributes one
independent variable. These orbits are disjoint and exhaust all
equations, so
\begin{equation}\label{eq:dimension-signed-count}
\dim W_a=a+\frac{a^2-a-f_a}{2}
=\begin{cases}a^2/2,&a\text{ even},\\(a^2+1)/2,&a\text{ odd}.
\end{cases}
\end{equation}
This is $d_a$.

For $W_a^{\mathrm s}$, first identify the off-diagonal cells by
transposition. They become the $\binom a2$ unordered pairs
$\{i,j\}$, $i\ne j$. Complementation acts on these pairs by
\[
\{i,j\}\longmapsto\{a+1-i,a+1-j\}.
\]
An unordered pair is fixed precisely when $i+j=a+1$. There are
$\lfloor a/2\rfloor$ such pairs, whose entries are forced to zero.
All other pairs occur in two-element orbits with opposite entries.
The diagonal remains unrestricted by the complement equations.
It follows that
\begin{equation}\label{eq:dimension-symmetric-count}
\dim W_a^{\mathrm s}
=a+\frac{\binom a2-\lfloor a/2\rfloor}{2}.
\end{equation}
For $a=2b$ this is $b(b+1)$; for $a=2b+1$ it is $(b+1)^2$.
These values equal $\lfloor(a+1)^2/4\rfloor=e_a$.

In the first model, select every diagonal entry and one cell from each
nonfixed off-diagonal $\tau$-orbit. In the symmetric model, select
every diagonal entry and one unordered pair from each nonfixed orbit
of unordered pairs. The remaining entries are recovered by copying,
changing signs, or setting a fixed coordinate to zero. Arbitrary
integer assignments to the selected coordinates give exactly the
integer matrices in the corresponding space. This proves the
lattice isomorphisms.
\end{proof}

\begin{lemma}[Full dimension and interior inequalities]
\label{lem:signed-dimension}
The dimensions of $R_a$ and $R_a^{\mathrm s}$ are $d_a$ and $e_a$.
In either defining space, the relative interior is given by strict
bounds $-1<z_{ij}<1$ and strict adjacent order inequalities.
\end{lemma}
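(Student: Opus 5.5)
The plan is to imitate Lemma~\ref{lem:qs-dimension}: exhibit a witness point $z^*$ in each defining space at which every defining inequality is strict, and then invoke Lemma~\ref{lem:strict}. Working in the independent coordinates of Lemma~\ref{lem:signed-coordinates}, the space $W_a$ (respectively $W_a^{\mathrm s}$) is identified with $\R^{d_a}$ (respectively $\R^{e_a}$). The polytope $R_a$ (respectively $R_a^{\mathrm s}$) is then cut out by finitely many affine inequalities in those coordinates, namely the bounds and the adjacent order inequalities. The statement reduces to finding an interior witness. Once it is found, Lemma~\ref{lem:strict} gives $\aff(R_a)=W_a$, so the dimension is $d_a$ (respectively $e_a$), and it also gives the relative interior as the strict-inequality locus.

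The natural witness is the centered staircase
\[
z^*_{ij}=\frac{\eta_{ij}}{a}=\frac{a+1-i-j}{a}.
\]
The first step is to check that $z^*$ lies in both defining spaces. It is symmetric, since $\eta_{ij}=\eta_{ji}$. For the signed relation, $\eta_{\tau(i,j)}=a+1-(a+1-j)-(a+1-i)=-(a+1-i-j)=-\eta_{ij}$, so $z^*_{\tau(i,j)}=-z^*_{ij}$ holds for all cells, in particular for $i\ne j$. On the anti-diagonal $\eta$ vanishes, which is consistent with the forced zeros. Hence $z^*\in W_a^{\mathrm s}\subset W_a$.

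The second step is to check strictness. Each adjacent difference of $z^*$ equals $1/a>0$. The entries range from $(a-1)/a$ at $(1,1)$ to $-(a-1)/a$ at $(a,a)$, so $-1<z^*_{ij}<1$. The forced-zero coordinates on the anti-diagonal are not inequalities; they are part of the linear space, so they cause no trouble. All inequalities are therefore strict at $z^*$. Applying Lemma~\ref{lem:strict} separately in $V=W_a$ and $V=W_a^{\mathrm s}$, where the inequalities are restricted affine functions, yields both claims.

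The only point needing care is that Lemma~\ref{lem:strict} must be applied with the defining inequalities regarded as affine functions on the subspace, and not on $\R^{a\times a}$. Since the lemma is stated for an arbitrary finite-dimensional space, this is immediate. Some inequalities may become redundant, or even coincide after the identifications; for example, an adjacent pair may map under $\tau$ to another adjacent pair. This is harmless, because strictness at $z^*$ holds for every listed inequality. I expect no genuine obstacle beyond verifying the $\tau$-antisymmetry of $\eta$.
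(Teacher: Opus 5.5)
Your proposal is correct and matches the paper's proof: the paper also takes the witness $z^*=\eta/a$, checks $\eta_{ji}=\eta_{ij}$ and $\eta_{\tau(i,j)}=-\eta_{ij}$ to place it in both $W_a$ and $W_a^{\mathrm s}$, and notes that the entries lie in $[-(a-1)/a,(a-1)/a]$ with every adjacent difference equal to $1/a$. It then applies Lemma~\ref{lem:strict} separately in the two spaces, as you do.
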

\begin{proof}
Let
\begin{equation}\label{eq:eta}
\eta_{ij}=a+1-i-j,\qquad z^*_{ij}=\frac{\eta_{ij}}{a}.
\end{equation}
One has $\eta_{ji}=\eta_{ij}$ and
\[
\eta_{\tau(i,j)}
=a+1-(a+1-j)-(a+1-i)=-\eta_{ij}.
\]
Thus $z^*$ belongs to both linear spaces. Its entries range from
$-(a-1)/a$ to $(a-1)/a$, and every adjacent difference equals $1/a$.
All inequalities in \eqref{eq:signed-polytopes} are therefore strict
at $z^*$. Lemma~\ref{lem:strict}, applied separately in the two
spaces, proves the assertions.
\end{proof}

\begin{lemma}[Odd threshold decomposition]\label{lem:signed-integrality}
Both $R_a$ and $R_a^{\mathrm s}$ are lattice polytopes in their stated
lattices.
\end{lemma}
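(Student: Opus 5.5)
The plan is to imitate the threshold argument of Lemma~\ref{lem:qs-integrality}, but with an \emph{odd} step function, so that the signed identifications \eqref{eq:signed-equalities} are preserved. For $z\in R_a$ and $0<t\leq1$, I will define the threshold matrix
\[
T_t(z)_{ij}=\begin{cases}1,&z_{ij}\geq t,\\0,&-t<z_{ij}<t,\\-1,&z_{ij}\leq -t.\end{cases}
\]
This is $z_{ij}\mapsto\sigma_t(z_{ij})$, where $\sigma_t:[-1,1]\to\{-1,0,1\}$ is weakly increasing and satisfies $\sigma_t(-x)=-\sigma_t(x)$. The oddness holds because $x\geq t$ iff $-x\leq -t$, and the middle interval is symmetric.

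Three properties of $T_t(z)$ follow from these facts.
\begin{enumerate}
\item Since $\sigma_t$ is weakly increasing, $z_{ij}\geq z_{i+1,j}$ implies $T_t(z)_{ij}\geq T_t(z)_{i+1,j}$. The same holds for columns.
\item Oddness turns $z_{\tau(i,j)}=-z_{ij}$ into the same relation for $T_t(z)$. In particular, entries forced to be zero stay zero.
\item Equal entries have equal images. Hence transpose symmetry is preserved, and $T_t$ maps $R_a^{\mathrm s}$ into itself.
\end{enumerate}
The entries of $T_t(z)$ lie in $\{-1,0,1\}$. So $T_t(z)$ is an integer point of $R_a$, respectively of $R_a^{\mathrm s}$, and hence lies in $\Gamma_a$, respectively $\Gamma_a^{\mathrm s}$.

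Next comes the decomposition. For $z\neq0$, list the distinct positive values among the absolute values $|z_{ij}|$ as $0=t_0<t_1<\cdots<t_k\leq1$. I claim
\[
z=\sum_{j=1}^k(t_j-t_{j-1})\,T_{t_j}(z)+(1-t_k)\,0 .
\]
To check it, suppose an entry equals $\pm t_b$. It contributes $\pm1$ exactly to the first $b$ threshold matrices, so its coefficient sum is $\pm t_b$. A zero entry contributes $0$ to every threshold matrix. The zero matrix lies in both polytopes, and the weights are nonnegative and sum to $1$. Therefore each polytope is the convex hull of its finitely many points in $\{-1,0,1\}^{a\times a}$ satisfying the defining conditions.

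Finally, I will reuse verbatim the extreme-point argument from the end of the proof of Lemma~\ref{lem:qs-integrality}. A vertex written as a convex combination of these finitely many points must coincide with one of them, so it is integral.

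The only delicate point is making the thresholds symmetric under negation: the cutoffs must be $z\geq t$ and $z\leq -t$, with closed inequalities on both sides. With asymmetric cutoffs, a pair $(z,-z)$ with $z=t$ would fail to map to opposite values, and the signed equalities would break. Beyond that, the argument is routine.
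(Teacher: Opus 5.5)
Your proposal is correct and follows essentially the same route as the paper: the same odd threshold map $T_t$, the same checks for order, signed equalities and symmetry, the same convex decomposition over the distinct positive absolute values, and the same extreme-point argument taken from Lemma~\ref{lem:qs-integrality}. The paper also invokes Lemma~\ref{lem:signed-coordinates} at the end to identify the integer matrices with lattice points in independent coordinates. That matches your observation that $\Gamma_a=W_a\cap\Z^{a\times a}$ and $\Gamma_a^{\mathrm s}=W_a^{\mathrm s}\cap\Z^{a\times a}$ contain these points directly.
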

\begin{proof}
For a matrix $z$ in either polytope and $0<t\leq1$, define
\begin{equation}\label{eq:T}
T_t(z)_{ij}=\begin{cases}
1,&z_{ij}\geq t,\\
0,&-t<z_{ij}<t,\\
-1,&z_{ij}\leq-t.
\end{cases}
\end{equation}
The threshold rule is nondecreasing in the entry and changes sign when
the entry changes sign, including at the endpoints $\pm t$.
Thus $z_p\geq z_q$ implies $T_t(z)_p\geq T_t(z)_q$, and
$z_{\tau(p)}=-z_p$ implies $T_t(z)_{\tau(p)}=-T_t(z)_p$.
Equal entries also have equal thresholds. Consequently $T_t(z)$
belongs to the same polytope as $z$ and has entries in $\{-1,0,1\}$.

For a nonzero feasible matrix $z$, list its distinct positive absolute
entry values as $0=t_0<t_1<\cdots<t_k\leq1$.
Set $T^{(j)}:=T_{t_j}(z)$ for $1\leq j\leq k$. We have the finite
convex decomposition
\begin{equation}\label{eq:threshold-signed}
z=\sum_{j=1}^k(t_j-t_{j-1})T^{(j)}+(1-t_k)0.
\end{equation}
At a coordinate $z_p=t_b>0$, the first $b$ threshold matrices have
entry $1$ and all later matrices have entry zero, so the right side
is $t_b$. At $z_p=-t_b$, the first $b$ entries are $-1$, giving
$-t_b$. A zero coordinate has zero in every summand. This proves
the identity coordinate by coordinate. Its coefficients are
nonnegative with sum one. The case $z=0$ is immediate.
Each polytope is consequently the convex hull of its finite set of
feasible $\{-1,0,1\}$ matrices. The extreme-point argument in the
proof of Lemma~\ref{lem:qs-integrality} shows that all vertices are
in that finite set. Lemma~\ref{lem:signed-coordinates} identifies
these integer matrices with lattice points in the independent
coordinates, proving the claim.
\end{proof}

For an integer $h\geq0$, addition of $h\one$ is a bijection from
$hR_a\cap\Gamma_a$ to $\QT_a(h)$. It changes bounds $[-h,h]$ to $[0,2h]$, preserves
adjacent differences, and changes the complementary sum zero to
$2h$. Its inverse is \eqref{eq:centering}. It preserves symmetry,
so the analogous statement holds for the symmetric model. Thus
\begin{equation}\label{eq:signed-counts}
Q_a(h)=\#(hR_a\cap\Gamma_a),\qquad
S_a(h)=\#(hR_a^{\mathrm s}\cap\Gamma_a^{\mathrm s}).
\end{equation}
The shift is integral since $h$ is an integer. Theorem~\ref{thm:ehrhart}
now proves polynomiality of degrees $d_a$ and $e_a$.

\section{Interior translations in the signed models}
\label{sec:signed-interior}

The centered staircase satisfies
\begin{equation}\label{eq:delta-eta}
\delta=\eta+a\one.
\end{equation}
Subtraction of $\delta$ from a plane partition corresponds to subtraction
of $\eta$ after centering at the respective half-heights.

\begin{lemma}\label{lem:signed-interior}
For $1\leq h<a$, neither $hR_a$ nor $hR_a^{\mathrm s}$ has a relative
interior lattice point. For every integer $h\geq a$,
\begin{align}
\relint(hR_a)\cap\Gamma_a
&=\eta+\bigl((h-a)R_a\cap\Gamma_a\bigr),\label{eq:interior-R}\\
\relint(hR_a^{\mathrm s})\cap\Gamma_a^{\mathrm s}
&=\eta+\bigl((h-a)R_a^{\mathrm s}\cap\Gamma_a^{\mathrm s}\bigr).
\label{eq:interior-Rs}
\end{align}
\end{lemma}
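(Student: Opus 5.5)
The plan mirrors the proof of Lemma~\ref{lem:qs-interior}, now in centered coordinates. By Lemma~\ref{lem:signed-dimension}, a lattice matrix $z\in\Gamma_a$ (respectively $\Gamma_a^{\mathrm s}$) lies in $\relint(hR_a)$ (respectively $\relint(hR_a^{\mathrm s})$) exactly when three conditions hold:
\[
-h+1\leq z_{ij}\leq h-1,\qquad z_p-z_q\geq1 \text{ for adjacent } p,q,
\]
together with the signed equalities \eqref{eq:signed-equalities} (and symmetry in the second model). Since all entries are integers, strict inequalities become inequalities with margin one.

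\textbf{Lower bound.} I first sum the unit steps along a south/east path from $(i,j)$ to $(a,a)$. Using $z_{aa}\geq -h+1$, this gives
\[
z_{ij}\geq -h+1+(2a-i-j)=\eta_{ij}-h+a.
\]

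\textbf{Upper bound.} For the upper bound I would like to use the signed relation directly. For $i\neq j$ we have $z_{ij}=-z_{\tau(i,j)}$, and the lower bound at $\tau(i,j)$ together with $\eta_{\tau(i,j)}=-\eta_{ij}$ gives
\[
z_{ij}\leq \eta_{ij}+h-a.
\]
The main obstacle is the diagonal, where no signed relation holds. Here I plan to argue by monotonicity through off-diagonal neighbours. For a diagonal cell $(i,i)$ with $i\geq2$, the cell $(i-1,i)$ is off-diagonal with $z_{i-1,i}\geq z_{ii}+1$, and $\eta_{i-1,i}=\eta_{ii}+1$. So
\[
z_{ii}\leq z_{i-1,i}-1\leq \eta_{ii}+h-a.
\]
For $(1,1)$ this route is unavailable; instead I use the upper bound $z_{11}\leq h-1=\eta_{11}+h-a$, since $\eta_{11}=a-1$. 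This works provided $a\geq2$; if an off-diagonal neighbour fails to exist in some corner case, I fall back on the box bound, which I expect to coincide with the staircase bound exactly at the corners.

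\textbf{Emptiness and the translation.} Combining both bounds gives
\[
\eta_{ij}-(h-a)\leq z_{ij}\leq \eta_{ij}+(h-a).
\]
For $h<a$ this is contradictory, so there are no interior lattice points. For $h\geq a$, I set $z'=z-\eta$, so $|z'_{ij}|\leq h-a$. Adjacent differences drop by exactly one, since every adjacent difference of $\eta$ equals $1$, so they stay nonnegative. Moreover $\eta$ lies in both $W_a$ and $W_a^{\mathrm s}$ (shown in Lemma~\ref{lem:signed-dimension}), so $z'$ satisfies the same linear relations and is integral. Hence $z'\in(h-a)R_a\cap\Gamma_a$, or the symmetric analogue.

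Conversely, given such a $z'$, the matrix $z=z'+\eta$ has adjacent differences at least $1$ and lies in the right space. Its entries satisfy
\[
|z_{ij}|\leq (h-a)+(a-1)=h-1,
\]
because $|\eta_{ij}|\leq a-1$. So $z$ is a relative interior lattice point, and adding and subtracting $\eta$ are mutually inverse bijections. The symmetric case needs no new argument, because the whole derivation only used the relations of $W_a$ and the fact that $\eta$ is symmetric.
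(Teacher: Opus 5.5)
Your proof is correct, but your upper bound takes a different route from the paper's.

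The paper works in uncentered coordinates and gets the upper bound from the complementation involution $\bar\pi_{ij}=2h-\pi_{\rho(i,j)}$. It first checks that $\bar\pi$ is again an interior lattice point: strict bounds, strict adjacent inequalities, the complementary equations via $\rho\tau=\tau\rho$, and, in the symmetric model, symmetry via $s\rho=\rho s$. It then applies the path lower bound to $\bar\pi$ at the cell $\rho(i,j)$. This treats every cell uniformly, diagonal included, and reuses the template of Lemma~\ref{lem:qs-interior}.

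You instead use the defining antisymmetry $z_{ij}=-z_{\tau(i,j)}$ as the reflection. The lower bound at $\tau(i,j)$, together with $\eta_{\tau(i,j)}=-\eta_{ij}$, gives the upper bound off the diagonal. So you never need to show that a transformed matrix is again an interior point. The symmetric model then follows directly from $W_a^{\mathrm s}\subset W_a$ and the symmetry of $\eta$.

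The cost is a separate diagonal step, which you handle correctly via $z_{ii}\le z_{i-1,i}-1$ and the box bound $z_{11}\le h-1=\eta_{11}+h-a$. Your argument is also tied to the sign-reversing relations, so it would not carry over to the quasi-symmetric class, where the paper's complementation argument is genuinely needed.

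Your hedge about further corner cases is unnecessary. For every $2\le i\le a$ the cell $(i-1,i)$ lies in $[a]^2$ off the diagonal, so $(1,1)$ is the only exception, and nothing requires $a\ge2$.

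Working throughout in centered coordinates also lets you skip the paper's passage through $\pi'=\pi-\delta$ before converting to $z'=z-\eta$.
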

\begin{proof}
Let $z\in\relint(hR_a)\cap\Gamma_a$ and put
$\pi=z+h\one$. Lemma~\ref{lem:signed-dimension} implies that the
integer entries of $\pi$ satisfy
\begin{equation}\label{eq:signed-strict-pi}
1\leq\pi_{ij}\leq2h-1,\qquad
\pi_{ij}-\pi_{i+1,j}\geq1,\quad
\pi_{ij}-\pi_{i,j+1}\geq1,
\end{equation}
as well as \eqref{eq:qt}. The path from $(i,j)$ to $(a,a)$ used in
\eqref{eq:lower-delta} depends only on these inequalities, and gives
$\pi_{ij}\geq\delta_{ij}$.

In the present box, the complement is
$\bar\pi_{ij}=2h-\pi_{\rho(i,j)}$.
As in \eqref{eq:complement}, it preserves the strict bounds and
adjacent inequalities. It also preserves the complementary equations:
$\rho$ and
$\tau$ commute, since $\tau=\rho s$ and $\rho s=s\rho$, and
$\rho$ preserves the off-diagonal set. Hence for $i\ne j$,
\begin{align*}
\bar\pi_{ij}+\bar\pi_{\tau(i,j)}
&=4h-\bigl(\pi_{\rho(i,j)}+\pi_{\rho\tau(i,j)}\bigr)\\
&=4h-\bigl(\pi_{\rho(i,j)}+\pi_{\tau\rho(i,j)}\bigr)=2h.
\end{align*}
Applying the lower bound to $\bar\pi$ at $\rho(i,j)$ yields
\begin{equation}\label{eq:signed-bounds}
\delta_{ij}\leq\pi_{ij}\leq2(h-a)+\delta_{ij}.
\end{equation}
If $h<a$, the upper endpoint is smaller than the lower endpoint,
so no such $\pi$, and hence no such $z$, exists.

For $h\geq a$, set $\pi'=\pi-\delta$. The bounds in
\eqref{eq:signed-bounds} give $0\leq\pi'_{ij}\leq2(h-a)$.
Subtracting the adjacent staircase difference one from each
inequality in \eqref{eq:signed-strict-pi} proves weak decrease of
$\pi'$ along both rows and columns. For each off-diagonal cell,
\begin{align*}
\pi'_{ij}+\pi'_{\tau(i,j)}
&=\pi_{ij}+\pi_{\tau(i,j)}-
  \bigl(\delta_{ij}+\delta_{\tau(i,j)}\bigr)\\
&=2h-2a=2(h-a).
\end{align*}
Thus $\pi'\in\QT_a(h-a)$.

Conversely, let $\pi'\in\QT_a(h-a)$ and set
$\pi=\pi'+\delta$. Every adjacent difference of $\pi$ is at
least one. The bounds on $\pi'$ and $1\leq\delta_{ij}\leq2a-1$
give $1\leq\pi_{ij}\leq2h-1$. Adding the two staircase entries at
a $\tau$-pair changes the complementary sum $2(h-a)$ to $2h$.
Therefore $\pi-h\one$ is in $\relint(hR_a)\cap\Gamma_a$.
These two constructions are inverse in uncentered coordinates.

The smaller partition $\pi'$ has half-height $h-a$. Its centered
matrix is therefore $z'=\pi'-(h-a)\one$.
Using \eqref{eq:delta-eta}, we obtain
\begin{align*}
z'&=\pi'-(h-a)\one\\
  &=\pi-\delta-(h-a)\one\\
  &=(z+h\one)-(\eta+a\one)-(h-a)\one\\
  &=z-\eta.
\end{align*}
Hence the inverse map is $z=z'+\eta$, and these maps give a bijection
between $\relint(hR_a)\cap\Gamma_a$ and $(h-a)R_a\cap\Gamma_a$,
proving \eqref{eq:interior-R}. The vector $\eta$ belongs to
$\Gamma_a$ by \eqref{eq:eta}.

For the symmetric model, the same steps apply with the extra
equalities $\pi_{ij}=\pi_{ji}$. Complementation preserves these
equalities because $s$ commutes with $\rho$. Addition and subtraction
of $\delta$ preserve them since $\delta_{ij}=\delta_{ji}$.
The centered translation preserves them since $\eta$ is symmetric.
Every forward and inverse map therefore restricts to the symmetric
sets, proving \eqref{eq:interior-Rs}, with
$\eta\in\Gamma_a^{\mathrm s}$. At $h=a$ the unique interior point
in either centered polytope is $\eta$.
\end{proof}

\section{Proofs of the two second-kind factorization theorems}
\label{sec:signed-factor}

\begin{proof}[Proof of Theorem~\ref{thm:qt}]
By \eqref{eq:signed-counts}, $Q_a$ is the Ehrhart polynomial of $R_a$.
Lemmas~\ref{lem:signed-dimension}, \ref{lem:signed-integrality},
and \ref{lem:signed-interior} allow us to apply
Theorem~\ref{thm:general} with $d=d_a$, $r=a$, and $\omega=\eta$.
Hence
\[
Q_a(-h)=(-1)^{d_a}Q_a(h-a),\qquad
B_a(c)=\binom{c+a/2-1}{a-1}.
\]
It remains to compute the parity of $d_a-a+1$.
If $a=2b$, then $d_a=2b^2$ and $d_a-a+1=2b^2-2b+1$ is odd.
If $a=2b+1$, then $d_a=2b^2+2b+1$ and $d_a-a+1=2b^2+1$ is odd.
In both cases the $\eps$ in Theorem~\ref{thm:general} equals one.
This proves the additional factor $c$ and the evenness of $\qtp$.
Its degree is
\[
\ell_a=d_a-a=
\begin{cases}2b(b-1),&a=2b,\\2b^2,&a=2b+1,\end{cases}
\]
equivalently $a(a-2)/2$ for even $a$ and $(a-1)^2/2$ for odd $a$.
The two cases of \eqref{eq:general-den}, with $r=a$, give precisely
\eqref{eq:main-qt-den}. Nonzeroness and uniqueness follow from the
general theorem.
\end{proof}

\begin{proof}[Proof of Theorem~\ref{thm:st}]
Apply Theorem~\ref{thm:general} to $R_a^{\mathrm s}$ in
$\Gamma_a^{\mathrm s}$, with $d=e_a$, $r=a$, and $\omega=\eta$.
To determine the parity, first take $a=2b$.
Then $e_a=b(b+1)$ is even, so $e_a-a+1$ is odd.
Next take $a=2b+1$. Then
\[
e_a-a+1=(b+1)^2-(2b+1)+1=b^2+1.
\]
This is even exactly when $b$ is odd, which is equivalent to
$a\equiv3\pmod4$. Therefore the $\eps$ from the general theorem
is exactly $\beta_a$, proving \eqref{eq:main-st}. Subtracting the
degrees of its factors gives
$m_a=e_a-(a-1)-\beta_a$. Explicitly,
\begin{equation}\label{eq:symmetric-degrees-expanded}
m_a=\begin{cases}
b(b-1),&a=2b,\\
4b^2,&a=4b+1,\\
4b^2+4b+2,&a=4b+3.
\end{cases}
\end{equation}
These are nonnegative even integers in the stated range $a\geq2$.
The denominator assertions are the even-$r$ and odd-$r$ cases of
\eqref{eq:general-den}.
\end{proof}

\begin{remark}[Half-integral centers]
If $a$ is odd, the center $a/2$ is a half-integer. The expressions
$Q_a(c-a/2)$ and $S_a(c-a/2)$ are nonetheless polynomials in $c$.
Their counting interpretation is on $c\in a/2+\NN$; their
factorizations and parity identities hold in $\Q[c]$.
\end{remark}

\section{Explicit small cases and a size-five formula}
\label{sec:examples}

We derive small-size formulas from the defining inequalities and the
degree bounds in Theorems~\ref{thm:qs}--\ref{thm:st}.

\subsection{The first quasi-symmetric enumerators}
For $a=1$, the sole entry has $H+1$ choices, so
\[
A_1(H)=H+1,\qquad p_1(c)=1.
\]
For $a=2$, the only nonfixed transpose pair lies on the anti-diagonal,
so all plane partitions in the box are quasi-symmetric. Write
\[
\pi=\begin{pmatrix}u&v\\w&t\end{pmatrix},\qquad
0\leq t\leq v,w\leq u\leq H.
\]
For fixed $u,t$, the entries $v,w$ are independent and each has
$u-t+1$ choices. Put $j=u-t$. For a fixed $j\in\{0,\ldots,H\}$,
the pairs $(u,t)$ are $(j,0),(j+1,1),\ldots,(H,H-j)$, numbering
$H-j+1$. Hence, with $k=j+1$,
\begin{align*}
A_2(H)&=\sum_{j=0}^H(H-j+1)(j+1)^2\\
&=(H+2)\sum_{k=1}^{H+1}k^2-\sum_{k=1}^{H+1}k^3\\
&=(H+1)(H+2)^2\left(\frac{2H+3}{6}-\frac{H+1}{4}\right)\\
&=\frac{(H+1)(H+2)^2(H+3)}{12}.
\end{align*}
Here the identities
$\sum_{k=1}^n k^2=n(n+1)(2n+1)/6$ and
$\sum_{k=1}^n k^3=n^2(n+1)^2/4$ follow by induction: their
right sides vanish at $n=0$ and their successive differences are
$n^2$ and $n^3$, respectively. Substituting $H=c-2$ gives
\begin{equation}\label{eq:qs2}
A_2(c-2)=\frac{(c-1)c^2(c+1)}{12}
=c\binom{c+1}{3}\frac12.
\end{equation}

For $a=3$, Theorem~\ref{thm:qs} says that
$p_3(c)=u c^2+v$. The value $A_3(0)=1$ is immediate.
At height one the matrices have the form
\[
\begin{pmatrix}b_1&b_2&b_3\\b_2&b_4&b_5\\b_6&b_5&b_7\end{pmatrix},
\qquad b_1\geq b_2\geq b_3,b_4,b_6\geq b_5\geq b_7,
\]
where all entries are zero or one, and $b_3,b_4,b_6$ have no mutual
order constraints. The cases $b_1=0$, $b_1=1,b_2=0$,
$b_1=b_2=1,b_5=0$, and $b_1=b_2=b_5=1$ contribute respectively
$1,1,8,2$, proving $A_3(1)=12$. Evaluate
\eqref{eq:main-qs} at $c=3,4$ to obtain
\[
9u+v=1,\qquad 6(16u+v)=12.
\]
Subtracting gives $7u=1$, and then $v=-2/7$. Thus
\begin{equation}\label{eq:qs3}
A_3(c-3)=\binom{c+2}{5}\frac{c^2-2}{7}.
\end{equation}
In this instance the residual denominator is $7$, whereas the
general bound is $7!/5!=42$.

\subsection{Second-kind enumerators for sizes two and three}
At size two, the complement equations fix the two off-diagonal
entries at $h$. The diagonal entries are independently chosen from
$[h,2h]\cap\Z$ and $[0,h]\cap\Z$. Every such matrix is symmetric,
so
\begin{equation}\label{eq:q2s2}
Q_2(h)=S_2(h)=(h+1)^2,
\qquad q_2=s_2=1.
\end{equation}

For size three, a centered integer matrix in $hR_3$ has the form
\[
z=\begin{pmatrix}
x&p&0\\q&y&-p\\0&-q&t
\end{pmatrix}.
\]
The order and bound inequalities are equivalent to
\[
0\leq p,q\leq h,\quad
M\leq x\leq h,\quad -m\leq y\leq m,\quad
-h\leq t\leq-M,
\]
where $M=\max(p,q)$ and $m=\min(p,q)$. Necessity follows by
reading each row and column; conversely these inequalities imply
each adjacent comparison in the displayed matrix. Consequently
\begin{align}
Q_3(h)&=\sum_{p=0}^h\sum_{q=0}^h
 (h-\max(p,q)+1)^2(2\min(p,q)+1),\label{eq:q3-sum}\\
S_3(h)&=\sum_{p=0}^h(h-p+1)^2(2p+1).
\label{eq:s3-sum}
\end{align}
The second identity follows from symmetry, which is exactly the
condition $p=q$. Thus
$Q_3(0)=S_3(0)=1$, $Q_3(1)=4+1+1+3=9$, and $S_3(1)=4+3=7$.

Both residual degrees are two. Writing the residuals as $u c^2+v$,
Theorem~\ref{thm:qt} at $c=3/2,5/2$ gives
\[
\frac94u+v=\frac23,\qquad
\frac{25}{4}u+v=\frac65.
\]
Subtracting gives $4u=8/15$, so $u=2/15$ and $v=11/30$.
For the symmetric class, Theorem~\ref{thm:st} gives instead
\[
\frac94u+v=1,\qquad
\frac{25}{4}u+v=\frac73,
\]
so $u=1/3$ and $v=1/4$. Therefore
\begin{align}
Q_3(c-3/2)&=c\binom{c+1/2}{2}\frac{4c^2+11}{30},
\label{eq:q3-final}\\
S_3(c-3/2)&=\binom{c+1/2}{2}\frac{4c^2+3}{12}.
\label{eq:s3-final}
\end{align}
By Theorems~\ref{thm:qt} and \ref{thm:st}, each residual lies in the
space spanned by $1$ and $c^2$. The two distinct values of $c^2$
therefore determine it uniquely.

\subsection{The symmetric class at size five}

\begin{proposition}\label{prop:s5}
For the symmetric second-kind class under \eqref{eq:qt},
\begin{equation}\label{eq:s5-corrected}
S_5(c-5/2)=c\binom{c+3/2}{4}
\frac{16c^4+64c^2-17}{2520}.
\end{equation}
Equivalently,
\begin{equation}\label{eq:s5-refactored}
S_5(c-5/2)=c\binom{c+3/2}{4}\binom{c+1/2}{2}
\frac{4c^2+17}{315}.
\end{equation}
\end{proposition}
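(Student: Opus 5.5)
By Theorem~\ref{thm:st} with $a=5$, we have $e_5=9$ and $\beta_5=1$, since $5\equiv1\pmod4$. Hence $m_5=9-5+1-1=4$, so
\[
S_5(c-5/2)=c\binom{c+3/2}{4}s_5(c),\qquad s_5(c)=uc^4+vc^2+w,
\]
with three unknown rational coefficients. The plan follows the size-three computation: compute $S_5(h)$ for $h=0,1,2$ directly, and evaluate the factorization at $c=5/2,7/2,9/2$. Because $s_5$ depends only on $c^2$ and the three values of $c^2$ are distinct, the resulting Vandermonde-type system in $(u,v,w)$ has a unique solution. This identifies $s_5$ with $(16c^4+64c^2-17)/2520$.

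\textbf{Step 1: parametrize the centered symmetric matrices.} Following Lemma~\ref{lem:signed-coordinates}, the unordered pairs $\{1,5\}$ and $\{2,4\}$ are forced to zero. The remaining off-diagonal pairs form four orbits. I take representatives $\alpha=z_{12}$, $\beta=z_{13}$, $\gamma=z_{14}$, $\delta'=z_{23}$ together with the five free diagonal entries $x_1,\ldots,x_5$. The matrix is then
\[
z=\begin{pmatrix}
x_1&\alpha&\beta&\gamma&0\\
\alpha&x_2&\delta'&0&-\gamma\\
\beta&\delta'&x_3&-\delta'&-\beta\\
\gamma&0&-\delta'&x_4&-\alpha\\
0&-\gamma&-\beta&-\alpha&x_5
\end{pmatrix}.
\]

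\textbf{Step 2: reduce the inequalities.} First read off the row and column inequalities among the off-diagonal entries alone. These give $\alpha\geq\beta\geq\gamma\geq0$ from row one, and further conditions such as $\alpha\geq\delta'\geq0$ and $\beta\geq\delta'$ from the column comparisons. For fixed off-diagonal data, each diagonal entry is confined to an interval determined by its left/upper and right/lower neighbours. For example, $\max(\alpha,\ldots)\leq x_1\leq h$, $x_2$ lies between $\max(\delta',0)$ and $\alpha$, and $-\delta'\leq x_3\leq\delta'$. The count is therefore a sum over admissible $(\alpha,\beta,\gamma,\delta')$ of a product of five interval lengths, analogous to \eqref{eq:q3-sum}.

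\textbf{Step 3: evaluate and solve.} Evaluate this sum at $h=1$ and $h=2$; we already have $S_5(0)=1$. The claimed formula predicts $S_5(1)=22$ and $S_5(2)=210$. Substituting these three values at $c=5/2,7/2,9/2$ and solving the $3\times3$ system gives the first display \eqref{eq:s5-corrected}.

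\textbf{Step 4: the refactored form.} The second display \eqref{eq:s5-refactored} is pure algebra. One has
\[
16c^4+64c^2-17=(4c^2-1)(4c^2+17),\qquad \binom{c+1/2}{2}=\frac{4c^2-1}{8},
\]
and $8/2520=1/315$.

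\textbf{Main obstacle.} The main difficulty is Step 2: getting the full, correct set of inequalities coupling the off-diagonal parameters, and keeping track of which diagonal bounds involve negated entries from the lower-right half. To guard against errors, I would first check that the reduced description reproduces $S_5(1)=22$ by a direct hand listing of the $\{-1,0,1\}$ matrices, using the vertex description from Lemma~\ref{lem:signed-integrality}. Only then would I trust the $h=2$ sum.
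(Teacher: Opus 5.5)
Your proposal is correct and follows the paper's proof essentially step for step. It uses the same parametrization of the centered symmetric matrix, the same reduction to a sum of products of interval lengths, the same interpolation at $c=5/2,7/2,9/2$ from $S_5(0),S_5(1),S_5(2)$, and the same factorization $16c^4+64c^2-17=(4c^2-1)(4c^2+17)$.

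To finish Steps 2 and 3, note that by symmetry the row inequalities suffice. They reduce exactly to $0\le\gamma\le\beta\le\alpha\le h$ and $0\le\delta'\le\beta$, with $\beta+1$ choices of $\gamma$. The diagonal ranges then have sizes $h-\alpha+1$, $\alpha-\delta'+1$, $2\delta'+1$, $\alpha-\delta'+1$, $h-\alpha+1$. You must then actually evaluate the resulting sum, which gives $22$ and $210$, rather than reading those values off the claimed formula.
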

\begin{proof}
Every centered symmetric matrix satisfying the complementary
equalities has a unique representation
\begin{equation}\label{eq:s5-matrix}
z=\begin{pmatrix}
x_1&p&q&r&0\\
p&x_2&s&0&-r\\
q&s&x_3&-s&-q\\
r&0&-s&x_4&-p\\
0&-r&-q&-p&x_5
\end{pmatrix}.
\end{equation}
Because the matrix is symmetric, its column inequalities are exactly
the transposes of its row inequalities. Reading the five rows shows
that feasibility in $hR_5^{\mathrm s}$ is equivalent to
\begin{equation}\label{eq:s5-constraints}
\begin{gathered}
0\leq q\leq p\leq h,\qquad 0\leq r\leq q,\qquad0\leq s\leq q,\\
p\leq x_1\leq h,\qquad s\leq x_2\leq p,\qquad
-s\leq x_3\leq s,\\
-p\leq x_4\leq-s,\qquad -h\leq x_5\leq-p.
\end{gathered}
\end{equation}
For example, the middle row gives $q\geq s\geq x_3\geq-s\geq-q$,
which is equivalent to $q\geq s\geq0$ and $-s\leq x_3\leq s$.
The first and last rows give $p\geq q\geq r\geq0$ and the displayed
bounds on $x_1,x_5$. The second and fourth rows give the bounds
on $x_2,x_4$. This proves the equivalence with
\eqref{eq:s5-constraints}.

For fixed $p,q,s$, the variable $r$ has $q+1$ choices. The five
diagonal entries have independently
\[
(h-p+1),\quad(p-s+1),\quad(2s+1),\quad(p-s+1),\quad(h-p+1)
\]
choices. It follows that
\begin{equation}\label{eq:s5-sum}
S_5(h)=\sum_{p=0}^h(h-p+1)^2
\sum_{q=0}^p(q+1)\sum_{s=0}^q(p-s+1)^2(2s+1).
\end{equation}
For $p=0,1,2$ the two inner sums are respectively
\begin{align*}
M_0&=1,\\
M_1&=4+2(4+3)=18,\\
M_2&=9+2(9+12)+3(9+12+5)=129.
\end{align*}
Thus $S_5(0)=1$, $S_5(1)=4+18=22$, and
$S_5(2)=9+4\cdot18+129=210$.

Since $e_5=9$ and $\beta_5=1$, Theorem~\ref{thm:st} proves that
$s_5(c)=u c^4+v c^2+w$. Evaluate the factorization at
$c=5/2,7/2,9/2$, corresponding to $h=0,1,2$. Its known factor
$c\binom{c+3/2}{4}$ has values $5/2,35/2,135/2$, respectively.
Therefore
\begin{equation}\label{eq:s5-system}
\begin{aligned}
\frac{625}{16}u+\frac{25}{4}v+w&=\frac25,\\
\frac{2401}{16}u+\frac{49}{4}v+w&=\frac{44}{35},\\
\frac{6561}{16}u+\frac{81}{4}v+w&=\frac{28}{9}.
\end{aligned}
\end{equation}
Subtract successive equations to obtain
\[
111u+6v=\frac67,\qquad260u+8v=\frac{584}{315}.
\]
Three times the second equation minus four times the first gives
$336u=32/15$, so $u=2/315$. The first difference equation then
gives $v=8/315$, and the first equation of
\eqref{eq:s5-system} gives $w=-17/2520$. This proves
\eqref{eq:s5-corrected}. Finally,
\[
16c^4+64c^2-17=(4c^2-1)(4c^2+17),\qquad
\binom{c+1/2}{2}=\frac{4c^2-1}{8},
\]
which proves \eqref{eq:s5-refactored}.
\end{proof}

\begin{remark}[Comparison with the printed size-five expression]
In the size-five formula in
\cite[Appendix A.3, p.~21]{SchreierAigner}, the last factor appears
as $(4c^4+17)/315$ following the same two binomial factors as in
\eqref{eq:s5-refactored}. Such an expression has degree $11$, whereas
the degree of $S_5$ is $9$. Under the normalization \eqref{eq:qt},
Proposition~\ref{prop:s5} gives the final factor $(4c^2+17)/315$.
\end{remark}

\section{Irreducibility and further questions}
\label{sec:limits}

An irreducible polynomial in $\Q[c]$ is, by convention, a nonzero
nonunit polynomial that cannot be written as a product of two
nonunits. The units of $\Q[c]$ are the nonzero constants. Thus
$p_1=1$, $p_2=1/2$, and
$q_2=s_2=1$ are units, not
irreducible elements.

The size-three examples yield irreducible residuals. A quadratic over
$\Q$ is reducible exactly when it has a rational root: a proper
factorization must have two linear factors, and a rational root
conversely gives a linear factor by division. The roots of $c^2-2$
are irrational. Indeed, if a reduced fraction $u/v$ satisfied
$(u/v)^2=2$, then $u^2=2v^2$ would force $u$ even and then $v$
even, a contradiction. This proves irreducibility in
\eqref{eq:qs3}. The quadratics $4c^2+11$ and $4c^2+3$ have no
real root, so in particular no rational root; the two size-three
residuals in \eqref{eq:q3-final} and \eqref{eq:s3-final} are also
irreducible over $\Q$.

In the symmetric family, Proposition~\ref{prop:s5} gives the reducible
size-five residual
\[
s_5(c)=\frac{(4c^2-1)(4c^2+17)}{2520}.
\]
For this family, the irreducibility question in
\cite[Section~4.1]{SchreierAigner} is restricted to even sizes.

\subsection{Finite-range computation}

\begin{proposition}[Computer-assisted verification]\label{prop:finite-range}
For every integer $3\leq a\leq24$, the residual polynomial $p_a(c)$
is irreducible in $\Q[c]$.
\end{proposition}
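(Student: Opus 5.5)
This is a finite computation, and I would organize it in two stages: first obtain $p_a$ exactly for each $3\leq a\leq24$, then certify its irreducibility over $\Q$.

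\textbf{Computing $p_a$.} By Theorem~\ref{thm:qs}, $p_a$ is even of degree $\ell=\lfloor(a-1)^2/2\rfloor$. It is therefore determined by $\ell/2+1$ coefficients. Those coefficients are fixed by the values $A_a(H)$ for $H=0,1,\ldots,\ell/2$, through
\[
p_a(c)=\frac{A_a(c-a)}{c^{\alpha_a}\binom{c+a-1}{2a-1}},\qquad c=a+H,
\]
and the denominator is nonzero at these points. The values $A_a(H)$ count order-reversing maps from the $N_a$-element poset of Section~\ref{sec:qs-model} into $\{0,\ldots,H\}$. Equivalently, $A_a(H)$ counts multichains of order ideals of length $H$.

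I would compute these counts by transfer-matrix dynamic programming on order ideals. The poset is graded by $i+j-2$, so I would sweep the anti-diagonal ranks and record the current boundary. Alternatively, I would compute the order polynomial as $\sum_k e_k\binom{H}{k-1}$, where $e_k$ counts surjective order-reversing maps onto a $k$-chain. This can be obtained from the lattice of order ideals.

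The main obstacle is size. The lattice of order ideals of a poset of width about $a$ and $N_a\approx a^2/2$ elements grows exponentially. For $a=24$ this may be huge, so I would instead use the height-indexed recursion row by row, exploiting symmetry to reduce the state space. Because $p_a$ has rational coefficients with denominator dividing $N_a!/(2a-1)!$, exact big-integer arithmetic is mandatory.

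As a safeguard, I would verify each result in three ways:
\begin{itemize}
\item the reciprocity $A_a(-H)=(-1)^{N_a}A_a(H-2a)$;
\item evenness of the fitted $p_a$;
\item one extra value $A_a(\ell/2+1)$ that was not used in the interpolation.
\end{itemize}

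\textbf{Certifying irreducibility.} Since $p_a$ is even, write $p_a(c)=g(c^2)$. Then $p_a$ is irreducible exactly when $g$ is irreducible and $g(x^2)$ does not factor. By Capelli's lemma, the latter means $g(x^2)$ does not split as $\pm h(x)h(-x)$.

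I would clear denominators to obtain a primitive integer polynomial and run an exact factorization algorithm (Zassenhaus/van Hoeij, as in PARI or Magma). A lighter alternative is a certificate: exhibit primes $\ell_1,\ell_2,\ldots$ at which the reduction of $D\,p_a$ is squarefree with leading coefficient not divisible by the prime, and whose mod-$\ell$ factorization degree patterns admit no common proper subset-sum. That proves irreducibility over $\Q$ and is easy to recheck independently.

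For $a=3$ the result already appears in \eqref{eq:qs3}. The remaining cases are recorded in a table listing, for each $a$, the degree and the certifying primes.
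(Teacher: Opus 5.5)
Your plan is correct, and it shares its skeleton with the paper's verification:
\begin{itemize}
\item $A_a(H)$ is computed by the recursion over $H$-term multichains of order ideals of $\mathcal P_a$;
\item the residual is interpolated in $c^2$ from $\deg p_a/2+1$ heights, using the factorization in Theorem~\ref{thm:qs};
\item your certificate option is exactly Lemma~\ref{lem:factor-degrees}. Your squarefreeness requirement is harmless but unnecessary.
\end{itemize}

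The real difference is the arithmetic. You reconstruct $p_a$ exactly over $\Q$ and assert that big-integer arithmetic is mandatory; the paper shows it is not. It computes $A_a(H)$ only modulo primes $\ell>\max\{N_a,2(a+k+2)\}$, where $2k=\deg p_a$. It then interpolates $p_a/(2a-1)!$ over $\mathbb F_\ell$ and uses $\den(p_a)\mid N_a!/(2a-1)!$. Since $N_a!$ is invertible modulo $\ell$, every reduction with nonvanishing leading coefficient is a nonzero scalar multiple of the reduction of the primitive integer polynomial attached to $p_a$. That is all Lemma~\ref{lem:factor-degrees} needs. So the denominator bound is precisely what makes the modular shortcut legitimate, not a reason to avoid it.

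What the modular route buys is feasibility. At $a=24$ your route needs $133$ exact values of the order polynomial of a $312$-element poset, each with hundreds of digits, propagated through an exponentially large family of ideals. The paper works with word-size residues instead, and further compresses states by grouping elements with identical strict predecessor and successor sets. Your route, in exchange, produces $p_a$ itself. A complete factorization algorithm can then be run, so nothing depends on degree patterns, which cannot certify every irreducible polynomial (for example $x^4+1$). Exact reconstruction from exact counts is in fact what the paper uses for $a=6,7,10$.

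Two small corrections:
\begin{itemize}
\item The surjection expansion should read $A_a(H)=\sum_k e_k\binom{H+1}{k}$, because the target chain $\{0,\ldots,H\}$ has $H+1$ elements.
\item If you fit $p_a$ with an even ansatz, evenness and the reciprocity identity hold automatically. Only the extra height is an independent check.
\end{itemize}
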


We use the following elementary criterion.
\begin{lemma}\label{lem:factor-degrees}
Let $f\in\Z[c]$ be primitive of degree $D\geq1$, and let $\mathcal L$
be a nonempty finite set of primes not dividing its leading coefficient.
For each $\ell\in\mathcal L$, let $\mathcal D_\ell$ be the set of subset
sums of the degrees of the irreducible factors of $f\bmod\ell$,
with multiplicities included. If
\[
\{1,\ldots,\lfloor D/2\rfloor\}
\cap\bigcap_{\ell\in\mathcal L}\mathcal D_\ell=\varnothing,
\]
then $f$ is irreducible over $\Q$.
\end{lemma}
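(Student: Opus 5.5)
We prove the contrapositive. Suppose $f=gk$ in $\Q[c]$ with $g,k$ nonunits, so $1\leq\deg g,\deg k\leq D-1$. Since $f$ is primitive in $\Z[c]$, Gauss's lemma lets us rescale $g$ and $k$ by rational constants so that $g,k\in\Z[c]$ and $f=gk$ holds in $\Z[c]$. Interchanging the factors if necessary, we may assume $e:=\deg g$ satisfies $1\leq e\leq\lfloor D/2\rfloor$. We will then show that $e$ lies in every $\mathcal D_\ell$, which contradicts the hypothesis that the displayed intersection is empty.

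Fix $\ell\in\mathcal L$ and reduce the factorization modulo $\ell$, obtaining $\bar f=\bar g\,\bar k$ in $\mathbb F_\ell[c]$. The leading coefficient of $f$ equals the product of the leading coefficients of $g$ and $k$, and by assumption it is not divisible by $\ell$. Neither leading coefficient of $g$ nor of $k$ is therefore divisible by $\ell$, so $\deg\bar g=e$, $\deg\bar k=D-e$, and $\deg\bar f=D$. In particular $\bar f\neq0$.

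The ring $\mathbb F_\ell[c]$ is a unique factorization domain. Factoring $\bar g$ and $\bar k$ into irreducibles and multiplying gives the factorization of $\bar f$ into irreducibles, counted with multiplicity and up to units. The multiset of irreducible factors of $\bar f$ is thus the disjoint union of the multisets for $\bar g$ and for $\bar k$. The degrees of the factors coming from $\bar g$ form a sub-multiset of the degrees of the factors of $\bar f$, and these degrees sum to $e$. This shows $e\in\mathcal D_\ell$.

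Since $\ell\in\mathcal L$ was arbitrary, $e\in\bigcap_{\ell\in\mathcal L}\mathcal D_\ell\cap\{1,\ldots,\lfloor D/2\rfloor\}$, which contradicts the hypothesis. The one point needing care is that the reduction preserves degrees. This is exactly where the hypothesis that $\ell$ does not divide the leading coefficient is used, together with the use of Gauss's lemma to pass to integer factors. Without that hypothesis a degree could drop on reduction modulo $\ell$, and the sub-multiset argument would break down.
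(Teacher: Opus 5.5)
Your proof is correct and follows the paper's argument. In both, Gauss's lemma gives an integral factorization with a factor of degree $e\leq\lfloor D/2\rfloor$; the leading-coefficient hypothesis keeps both degrees unchanged modulo each $\ell\in\mathcal L$; and unique factorization in $\mathbb F_\ell[c]$ then forces $e\in\mathcal D_\ell$ for every $\ell$. You spell out the degree-preservation and sub-multiset steps that the paper states in a single line each.
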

\begin{proof}
By Gauss's lemma, a proper factorization would give $f=uv$ with
$u,v\in\Z[c]$ and $1\leq\deg u\leq\lfloor D/2\rfloor$.
Neither factor loses degree modulo any $\ell\in\mathcal L$.
Unique factorization over $\mathbb F_\ell$ therefore implies
$\deg u\in\mathcal D_\ell$ for every $\ell$, a contradiction.
\end{proof}

\begin{proof}[Computational verification]
Let $\mathcal P_a$ be the poset in Section~\ref{sec:qs-model}.
A labeling of height at most $H$ corresponds to $H$ weakly decreasing order ideals,
given by the sets of elements with label at least $1,\ldots,H$;
the label of an element is recovered by counting the sets containing it.
For the computation, partition $\mathcal P_a$ into blocks whose elements
have identical strict predecessor and successor sets.
Write $m_i$ for the size of block $i$ and $\mathbf m=(m_i)_i$.
For each feasible occupancy vector $\mathbf k$, fix a representative
ideal containing $k_i$ elements of block $i$, and let $w_H(\mathbf k)$
count the $H$-term weakly decreasing sequences of ideals contained in it.
Permutations within each block preserve the order, so this representative
contains exactly $\prod_i\binom{k_i}{j_i}$ subideals of any feasible
occupancy $\mathbf j\leq\mathbf k$. Thus
\begin{equation}\label{eq:weighted-ideal}
\begin{gathered}
w_0(\mathbf k)=1,\qquad
w_{H+1}(\mathbf k)=
\sum_{\substack{\mathbf j\leq\mathbf k\\\mathbf j\text{ feasible}}}
\left(\prod_i\binom{k_i}{j_i}\right)w_H(\mathbf j),\\
A_a(H)=w_H(\mathbf m).
\end{gathered}
\end{equation}

Put $2k=\deg p_a$. Theorem~\ref{thm:qs} gives
\begin{equation}\label{eq:modular-interpolation}
\frac{p_a(c)}{(2a-1)!}
=\frac{A_a(c-a)}{c^{\alpha_a}\prod_{j=1-a}^{a-1}(c-j)}
=\sum_{i=0}^k b_i c^{2i},\qquad N_a!b_i\in\Z.
\end{equation}
For a prime $\ell>\max\{N_a,2(a+k+2)\}$, compute
$A_a(H)\bmod\ell$ for $0\leq H\leq k+2$ using
\eqref{eq:weighted-ideal}. At $c=a+H$, all displayed denominator
factors are nonzero modulo $\ell$, and the points $(a+H)^2$ for
$0\leq H\leq k$ are distinct modulo $\ell$: their pairwise differences
factor into a nonzero difference and a positive sum, both smaller than
$\ell$ in absolute value. These $k+1$ values determine the $b_i$
modulo $\ell$; the remaining two heights give independent checks.
Reductions with a vanishing leading coefficient are discarded.
Since $N_a!$ is invertible modulo $\ell$, every retained reduction
differs by a nonzero scalar from the reduction of the primitive integer
polynomial associated with $p_a$.

The resulting polynomials are factored in $\mathbb F_\ell[c]$, not
in a variable representing $c^2$, using FLINT 3.6.0 through
\texttt{python-flint} 0.9.0 \cite{FLINT,PythonFlint}.
Each factor is checked by Rabin's irreducibility criterion \cite{Rabin},
and the factors, with multiplicities and leading scalar, are multiplied
back to recover the input. For $a=6,7,10$, reductions are instead taken
from primitive integer polynomials reconstructed from exact counts
using the established degree bound; the same degree-preservation and
factor checks apply.
For every $3\leq a\leq24$, the retained factorizations give an empty
intersection in Lemma~\ref{lem:factor-degrees}, proving the assertion.
All arithmetic is exact. The compressed counts also agree with exact
integer counts at all $423$ tested height values for $1\leq a\leq17$.
\end{proof}

\subsection{Open arithmetic questions}
Theorems~\ref{thm:qs}--\ref{thm:st} leave the general irreducibility
questions for $p_a$ and $q_a$ at $a\geq3$, and for $s_a$ at even
$a\geq4$. Proposition~\ref{prop:finite-range} settles only the
quasi-symmetric family in its stated finite range. It gives no
verification for $a\geq25$ or for the other two families beyond the
individual examples proved above. Determining the exact coefficient
denominators, beyond the divisibility bounds in the three theorems,
is a further arithmetic problem.

\section*{Declaration on the Use of Generative AI}
Parts of the proofs in this paper were generated using the generative AI
tools ChatGPT and Codex. The author has reviewed and verified the relevant
proofs.


\begin{thebibliography}{99}

\bibitem{BeckRobins}
M.~Beck and S.~Robins,
\emph{Computing the Continuous Discretely: Integer-Point Enumeration
in Polyhedra}, 2nd ed., Undergraduate Texts in Mathematics,
Springer, New York, 2015.
\href{https://doi.org/10.1007/978-1-4939-2969-6}
{doi:10.1007/978-1-4939-2969-6}.

\bibitem{Ehrhart}
E.~Ehrhart,
Sur les poly\`edres rationnels homoth\'etiques \`a $n$ dimensions,
\emph{C.~R. Acad. Sci. Paris} \textbf{254} (1962), 616--618.

\bibitem{FLINT}
The FLINT development team,
\emph{FLINT: Fast Library for Number Theory},
documentation, \texttt{nmod\_poly\_factor} module,
\url{https://flintlib.org/doc/nmod_poly_factor.html},
accessed September 13, 2026.

\bibitem{Macdonald}
I.~G.~Macdonald,
Polynomials associated with finite cell-complexes,
\emph{J. London Math. Soc.} (2) \textbf{4} (1971), 181--192.
\href{https://doi.org/10.1112/jlms/s2-4.1.181}
{doi:10.1112/jlms/s2-4.1.181}.

\bibitem{PythonFlint}
The python-flint development team,
\emph{python-flint}, version 0.9.0,
\url{https://github.com/flintlib/python-flint}.

\bibitem{Rabin}
M.~O.~Rabin,
Probabilistic algorithms in finite fields,
\emph{SIAM J. Comput.} \textbf{9} (1980), no.~2, 273--280.
\href{https://doi.org/10.1137/0209024}
{doi:10.1137/0209024}.

\bibitem{SchreierAigner}
F.~Schreier-Aigner,
Fully complementary higher dimensional partitions,
\emph{Ann. Comb.} \textbf{29} (2025), 1--23.
\href{https://doi.org/10.1007/s00026-024-00691-5}
{doi:10.1007/s00026-024-00691-5}.

\bibitem{Stanley}
R.~P.~Stanley,
Two poset polytopes,
\emph{Discrete Comput. Geom.} \textbf{1} (1986), 9--23.
\href{https://doi.org/10.1007/BF02187680}
{doi:10.1007/BF02187680}.

\end{thebibliography}
\end{document}